\let\footnote=\endnote
 \def\newblock{\ }%
\begin{document}


\RUNAUTHOR{}

\RUNTITLE{}

\TITLE{Two-stage Stochastic Lot-sizing Problem with Chance-constrained Condition in the Second Stage}

\ARTICLEAUTHORS{%
\AUTHOR{Zeyang Zhang, Chuanhou Gao}
\AFF{School of Mathematical Sciences, Zhejiang University, Hangzhou, \EMAIL{zy\_zhang@zju.edu.cn, gaochou@zju.edu.cn}} 
\AUTHOR{Shabbir Ahmed}
\AFF{School of Industrial \& Systems Engineering, Georgia Institute of Technology, Atlanta, \EMAIL{shabbir.ahmed@isye.gatech.edu}}
} 

\ABSTRACT{%
In a given production planning horizon, the demands may only be comfirmed in part of the whole periods, and the others are uncertain. In this paper, we consider a two-stage stochastic lot-sizing problem with chance-constrained condition in the second stage. In the first stage, the demand is deterministic in each period, while in the second stage, the demands are random variables with finite distribution. We prove the optimality condition of the solutions under modified Wagner-Whitin condition and further develop a new equivalent MIP formulation which can depict the feasible region better. We also show that any subproblem fixing the selection of occurred scenarios has a complete linear description of its convex hull. A branch-and-bound algorithm is provided utilizing the character of the given subproblems. 
}%


\KEYWORDS{Lot sizing; Two-stage stochastic programming; Chance constraints; Polyhedral combinatorics.} 

\maketitle

%


\renewcommand{\algorithmicrequire}{\textbf{Initialization:}}
\renewcommand{\algorithmicensure}{\textbf{Main loop:}}
\section{Introduction}
Production planning problems are a common kind of problem that we usually meet in real life, and a popular type of that is lot-sizing problem. The deterministic uncapacitated lot-sizing problem(ULS) (without backlogging) is first proposed by \citet{1doi:10.1287/mnsc.5.1.89}, which is to determine a production plan for a product to satisfy demands over a finite time horizon while minimizing the summation of setup, production, and inventory holding costs. An $ \mathcal{O}(T^2) $ algorithm is proposed by the authors for ULS, where $ T $ is the total number of time periods in the planning horizon. \citet{4doi:10.1287/mnsc.37.8.909} and \citet{5doi:10.1287/opre.40.1.S145} improve the polynomial algorithm so that it can be solved in $ \mathcal{O}(T\log T) $ time and even in $ \mathcal{O}(T) $ time with Wagner-Whitin property. An explicit convex hull description of ULS is given by \citet{2Barany1984} utilizing the so-called $ (l, S) $ inequalities. 

The first polyhedral study of deterministic ULS problem with backlogging (ULSB) is performed by \citet{3Pochet1988}, in which the authors reformulate the structure of the problem introducing new variables to obtain extended formulations by several methods. The complete linear description of the convex hull of ULSB is provided by \citet{12Küçükyavuz2009} by generalizing the inequalities of \citet{3Pochet1988}. Similar $ \mathcal{O}(T\log T) $ algorithm for ULSB like that for ULS is proposed by \citet{7doi:10.1002/1520-6750(199306)40:4<459::AID-NAV3220400404>3.0.CO;2-8}. In addition, \citet{8Pochet1994} carry out the polyhedra study of lot-sizing problem under several different conditions with Wagner-Whitin costs.

When considering the indeterminacy of the demand in each period, the stochastic uncapacitated lot-sizing problem (SULS) is proposed. \citet{10doi:10.1287/opre.51.3.461.14960} and \citet{11Ahmed2003} study the stochastic capacity expansion problems which include the form SULS as a submodel. Furthermore, \citet{13Guan2006} study the polyhedral of SULS based on scenario tree. They provide several kinds of valid inequalities, and give a sufficient condition under which those inequalities are facet-defining. As for algorithm study, \citet{14doi:10.1287/opre.1070.0479} propose an $ \mathcal{O}(n^3\log{\mathcal{C}}) $ time dynamic algorithm for SULS, where $ n $ is the number of nodes in the scenario tree and $ \mathcal{C} $ is the maximum number of children for each node in the tree. Similar algorithms can be generalized to SULS with random lead times (\citet{15HUANG2008303} and \citet{16JIANG201174}). 

\citet{21Liu2018} consider that the stochastic lot-sizing model may lead to an over-conservative solution with excessive inventory, because the uncertain demand in each time period has to be satisfied. Thereby, a chance-constrained lot-sizing formulation is introduced, which is referred to as the static probabilistic lot-sizing problem (SPLS). SPLS assumes that the demands over the planning horizon are random with finite distribution, and for a given service level, $ 1-\varepsilon $, the production schedule only need to meet the demands with probability at least $ 1-\varepsilon $ at the beginning of the planning horizon. The first variant of SPLS is provided by \citet{19doi:10.1080/1055678021000033937}, however, the objective function does not consider the inventory cost. \citet{31Küçükyavuz2012}, \citet{32Abdi2016} and \citet{33Zhao2017} all solve the SPLS model with the inventory costs in branch-and-cut algorithms, which are performed as the testification of the validity of their proposed valid inequalities for general chance-constrained programming problems. Another dynamic variant of SPLS that updates the production schedule after the scenario realization of the former time periods is studied by \citet{20doi:10.1287/mnsc.2013.1822}. For the polyhedral study, \citet{21Liu2018} give the first relevant result that exploits the lot-sizing structure into the construction of valid inequalities and facet-defining inequalities for SPLS, however, the study is under a equiprobable condition.

Chance-constrained programming (CCP) is introduced by \citet{22doi:10.1287/mnsc.4.3.235} and \citet{23doi:10.1287/opre.11.1.18}, whose feasible solutions satisfy the constraints with probability over $ 1-\varepsilon $, where $ \varepsilon $ is a given threshold. This kind of problems has been extensively studied, details about background and a list of references can be seen in \citet{24PREKOPA2003267}. CCP problem with stochastic right-hand side under a finite discrete distribution is a significant class of CCP that is explored broadly, and the CCP in our paper belongs to that type as well. To deal with the deterministic equivalent formulation of the former kind of CCP problem, an collection of efficient valid inequalities called mixing inequalities is introduced by \citet{26Günlük2001} over mixing set, which refers to the method in the study of vertex packing problem (\citet{27Atamtürk2000}). Based on mixing set, \citet{30Luedtke2010} and \citet{31Küçükyavuz2012} give some stronger valid inequalities, and study under which conditions the proposed valid inequalities are sufficient to be facet-defining. \citet{32Abdi2016} explore the characterization of valid inequalities for single mixing set, and explicitly develop a set of facet-defining inequalities under some particular conditions. \citet{33Zhao2017} generalize the valid and facet-defining inequalities presented in \citet{31Küçükyavuz2012} and \citet{32Abdi2016}, expect that another family of valid inequalities called knapsack cover inequalities is provided by lifting techniques. 

\citet{18Zhou2013} propose a two-stage stochastic lot-sizng problem (with backlogging), in which the planning horizon is separated into two stages, in the first stage the cost parameters are deterministic, while in the second stage the cost parameters are random variables, and the demands in whole horizon are deterministic. The authors give a high-dimensional complete linear description of the convex hull of that problem. Now we consider a different kind of two-stage ULS (without backlogging): the demands can be confirmed in some latest	periods, however, beyond these periods, they become uncertain, meanwhile the cost parameters are deterministic in each period. Briefly, the demands are fixed in the first stage, and random variables in the second stage, which is likely to occur in the real life when a long-term production planning is going to be made. In addition, we assume the second stage random demands obey a finite distribution, and introduce a chance-constrained condition to avoid over-conservative solutions, like \citet{21Liu2018} do, but without the limitation of equiprobable condition. We expect to provide a polyhedral study of our proposed two-stage stochastic lot-sizing problem with chance-constrained condition in the second stage (SLSCC). 

The remaining part of this paper is organized as follows: In Sect.2, we depict the necessary notations and the mathmematical formulation of SLSCC. In Sect.3.1, we define the modified Wagner-Whitin condition for SLSCC, and discover the optimality condition of the solutions which can be used to generate a better new equivalent MIP formulation. In Sect.3.2, we prove that there is a complete linear description of the convex hull for any $ S $-subproblem which is obtained by fixing the occurred scenarios for the new formulation. In Sect.4, we provide a branch-and-bound algorithm utilizing the good character of $ S $-subproblem. 

\section{Mathematical Formulation}
In this paper, we consider a planning horizon with length $ T $, let $ N:=\{1,\ldots,T\} $. We assume that the demands for period $ 1 $ to $ p $ are deterministic, $ p\in N $, while for the remaining periods are uncertain and follow a discrete probability distribution with finite support, so that the entire planning horizon is separated into two stages. For convinence, we define $ [a,b]=\{a,a+1,\ldots,b-1,b\} $, for any integers $ a\le b $, throughout the rest of paper. The corresponding two-stage stochastic lot-sizing problem with chance-constrained condition in the second stage (SLSCC) then can be formulated as follows:
\begin{align}
\min \quad& {\alpha^1}^\top x^1+{\beta^1}^\top y^1+{h^1}^\top s^1+{\alpha^2}^\top x^2+{\beta^2}^\top y^2+{\mathbb{E}}_\xi ({\Theta}_\xi(x^2))\nonumber\\
&x_i+s_{i-1}=d_i+s_i, \quad i\in [1,p]\\
&\mathbb{P}(s_p+\sum\limits_{t=p+1}^{i} x_t\ge \sum\limits_{t=p+1}^{i} \xi_t, \ i\in[p+1,T])\ge 1-\varepsilon \\
& x_i\le M_i y_i, \quad i\in N\\
& x^1,\ s^1 \in\mathbb{R}_+^p,\quad x^2\in \mathbb{R}_+^{T-p}, \quad y^1\in\{0, 1\}^p, \quad y^2\in\{0, 1\}^{T-p}, 
\end{align}
where $ x^1= (x_1, \ldots, x_p)$, $y^1=(y_1, \ldots, y_p)$ and $s^1=(s_1, \ldots, s_p) $ represent the production level vector, set up decision vector and inventory level vector in the first stage respectively. $ \alpha^1= (\alpha_1, \ldots, \alpha_p) $, $ \beta^1=(\beta_1, \ldots, \beta_p) $ and $ h^1=(h_1, \ldots, h_p) $ are the unit production cost vector, fixed setup cost vector, and holding cost vector corresponding to $ x^1 $, $ y^1 $ and $ s^1 $. $ x^2=(x_{p+1}, \ldots, x_T) $, $ y^2=(y_{p+1}, \ldots, y_T) $ and $ \alpha^2=(\alpha_{p+1}, \ldots, \alpha_T) $, $ \beta^2=(\beta_{p+1}, \ldots, \beta_T) $ have the similar meaning in the second stage. $ \xi=(\xi_{p+1}, \cdots, \xi_T) $ is the uncertain demand vector in the second stage, and $ \varepsilon $ is a given threshold by which the probability of an undesirable outcome is limited. Constraints $ (1) $ are the relations among production, inventory and demand in the first stage. Constraint $ (2) $ ensures that the probability of violating the demands from period $ p+1 $ to $ T $ should be less than the given risk rate $ \epsilon $. $ M_i $ is a large constant to make constraints $ (3) $ redundant when $ y_i $ equals to one, for all $ i\in N $. In addition, $ {\Theta}_\xi(x^2) $ is the value function given by:
\begin{align}
{\Theta}_\xi(x^2)=& \min \quad {h^2}^\top s^2(\xi) \nonumber\\
&s_i(\xi)\ge [s_p+\sum\limits_{t=p+1}^{i} (x_t-\xi_t)] \mathbbm{1}_{\xi},\quad i\in[p+1,T]\\
&s^2(\xi)\in \mathbb{R}_+^{T-p},
\end{align}
where $ s^2(\xi)=(s_{p+1}(\xi), \cdots, s_T(\xi) )$ is the vector of second-stage inventory variables related to the realization of uncertain demand vector $ \xi $, and $ h^2 $ is the corresponding nonnegative holding cost vector. $ \mathbbm{1}_{\xi} $ is an indicator function, which equals to $ 1 $ when scenario is chosen, and $ 0 $ otherwise. Constraints $ (5) $ and $ (6) $ guarantee that the inventory level can be calculated correctly for the corresponding demand realization.

Assume the finite scenario set $ \Omega=\{1,\ldots,m\} $, let $ p_j $ be the probability of scenario $ j $, for all $ j\in \Omega $. In addition, let $ d_{ji} $ be the demand for period $ i $ under scenario $ j $, for all $ i\in N $ and $ j\in \Omega $. Let $ s_{ji} $ be the inventory at the end of time period $ i\in N $ in scenario $ j\in \Omega $, which incurs a unit holding cost $ h_i $. Then we can transform the formulation of SLSCC into a deterministic equivalent formulation as (refer to \citet{21Liu2018}):
\begin{align}
\min \quad &{\alpha^1}^\top x^1+{\beta^1}^\top y^1+{h^1}^\top s^1+{\alpha^2}^\top x^2+{\beta^2}^\top y^2+\sum\limits_{j=1}^{m} p_j {h^2}^\top s_j^2\nonumber\\
&x_i+s_{i-1}=d_i+s_i, \quad i\in[1,p]\\
&s_p+\sum\limits_{i=p+1}^{t} x_i\ge \sum\limits_{i=p+1}^{t} d_{ji}(1-z_j)_,\quad t\in[p+1,T],\quad j\in \Omega\\
&\sum\limits_{j=1}^{m} p_j z_j\le \varepsilon\\
&s_{ji} \ge [\sum\limits_{t=p+1}^{i} (x_t-d_{jt}) +s_p](1-z_j) ,\quad i\in[p+1,T],\quad j\in \Omega \\
&x_i\le M_i y_i \quad i\in N \\
&x^1,\ s^1 \in\mathbb{R}_+^p,\quad x^2,\ s_j^2\in \mathbb{R}_+^{T-p}, \quad y^1\in\{0, 1\}^p, \quad y^2\in\{0, 1\}^{T-p},\quad z\in\{0,1\}^m
\end{align}
where $ z_j $ is the introduced additional indicator variable, which equals to 0 if the demand in each time period under scenario $ j $ is satisfied, and 1 otherwise, for all $ j\in\Omega $. $ M_i=\sum\limits_{t=i}^{p}d_t +\max_{j\in\Omega}\{\sum\limits_{t=p+1}^{T}d_{jt}\} $, for $ i\in[1,p] $, and $ M_i=\max_{j\in\Omega}\{\sum\limits_{t=i}^{T}d_{jt}\} $, for $ i\in[p+1,T] $. Since the deterministice equivalent formulation can only yield a very weak linear programming relaxation, the polyhedral structure of that need further study then. In the next section, we will show there is a better equivalent formulation under a stronger Wagner-Whitin condition defined by us.

\begin{remark}
	Note that for constraint $ (5) $ we make a minor change comparing to that in the formlulation of \citet*{21Liu2018}, i.e., we multiply an indicator $ \mathbbm{1}_{\xi} $ on the right-hand side of the inequality. Through this handling, the second-stage inventory level of every period of unchosen scenarios will be zero and thus not produce cost to the objective function. Without multiplying the indicator, for any unchosen scenario $ \xi $, $ s_p+\sum\limits_{t=p+1}^{i} (x_t-\xi_t) $ can be negative for some period $ i\in[p+1,T] $, then the optimal inventory level $ s_i^2(\xi) $ will be zero for such kind of period $ i $ because of constraint $ (6) $. In fact, $ s_i^2(\xi)=s_p+\sum\limits_{t=p+1}^{i} (x_t-\xi_t) $ is just the real inventory level in period $ i\in[p+1,T] $ for demand realization $ \xi $, and when it is negative means that demand is not satisfied in this period, which incurs backlogging. In \citet*{21Liu2018}, the authors add the cost of real inventory level of unchosen scenario to objective function when it is nonnegative, and omit the cost of backlogging when it is negative. However, we consider that the cost of both inventory and backlogging for unchosen scenarios should not be included in the objective function, because we do not care about any influence of the unchosen scenarios during production. Though our handling will incur a kind of nonlinear constraint $ (10) $ in the deterministic equivalent formulation, we will show that under an assumption there is a mixed-integer linear  formulation have the same optimal solutions as original formulation in Sect.3.1. In addition, we will also show that a kind of subproblem has a good property in Sect.3.2.
\end{remark}

\section{Optimality Condition and New Formulation}
In this section, we first make a modified Wagner-Whitin costs assumption, and then based on that study the optimal solution forms of production and inventory for SLSCC. Furthermore, we generate a refoumulation by the optimal solution forms, which can depict a much better polyhedral structure of the feasible region. In addition, we define a kind of subproblem by restricting the occurred scenarios, and then show we can construct the convex hull of the feasible region of those subproblems. 

\subsection{An Equivalent MIP Formulation}
The stronger version of the Wagner-Whitin condition as follows:
\begin{assumption}
	(Stronger Wagner-Whitin condition)
	For the two-stage SLSCC problem, it satisfies the following conditions:
	\begin{center}
		$ \alpha_i+h_i \ge \alpha_{i+1}$, for $ i=1,\ldots,p$, \quad$ \alpha_i+(1-\varepsilon)h_i \ge \alpha_{i+1}$, for $ i=p+1,\ldots,T-1 $.
	\end{center}  
	
\end{assumption}

Wagner-Whitin condition is a classical assumption for lot-sizing problems, which means the sum of current period's unit production cost and unit inventory cost more than next period's unit production cost. Under that condition, at least one of the optimal solutions satisfies that there is no inventory when starting production. We want to maintain the property in our problems as well, namely, apart from the same condition in the first stage, there is an optimal solution satisfy that at least one of the occurred scenarios' inventory is exhausted when starting production in the second stage. Therefore, we strengthen the Wager-Whitin condition somewhat. Our assumption is valid in many practical problems because $ \varepsilon $ is usually small, then it's very possible to hold if traditional Wager-Whitin condition holds.

For any period $ i $, let $ \psi(i) $ be the time period of the earliest descendant of period $ i $ which is set up, i.e., $ \psi(i)=min\{j: y_j=1, j\in[i+1,T]\} $, $ \phi(i) $ be the time period of the lastest ascendant of period $ i $ which is set up, i.e., $ \phi(i)=max\{j: y_j=1, j\in[1,i-1]\} $, and $ J_z $ be the index set of occurred scenarios related to a certain indicator vector $ z $, i.e., $ J_z=\{j: z_j=0, j\in\Omega\} $. Then we can describe the property of the optimal solution in the following proposition.

\begin{proposition}
	For the two-stage SLSCC problem, under Assumption 1, there exists an optimal production level of the form:\\
	if $ x_i> 0 $, then 
	\begin{eqnarray} 
	x_i &=& 
	\left\{ \begin{array}{lll} 
	\sum\limits_{t=i}^{\psi(i)-1}d_t, \ \ \psi(i)\le p. \\
	\sum\limits_{t=i}^{p}d_t+\max\limits_{\tau\in{J_z}}\{\sum\limits_{t=p+1}^{\psi(i)-1} d_{\tau t}\}, \ \ i\le p, \ \ \psi(i)\ge p+1. \\
	\max\limits_{\tau\in{J_z}}\{\sum\limits_{t=i}^{\psi(i)-1}d_{\tau t}-s_{\tau(i-1)}\}, \ \ i\ge p+1.
	\end{array} 
	\right. 
	\end{eqnarray}
	and an optimal inventory level of the form:
	\begin{eqnarray} 
	s_i &=& 
	\left\{ \begin{array}{lll} 
	\sum\limits_{t=i+1}^{\psi(i)-1}d_t, \ \ \psi(i)\le p. \\
	\sum\limits_{t=i+1}^{p}d_t+\max\limits_{\tau \in{J_z}}\{\sum\limits_{t=p+1}^{\psi(i)-1} d_{\tau t}\}, \ \ i\le p, \ \ \psi(i)\ge p+1.
	\end{array} 
	\right. 
	\end{eqnarray}
	
	if $ z_j=0 $, then
	\begin{eqnarray} 
	s_{ji} &=& 
	\left\{ \begin{array}{lll} 
	\max\limits_{\tau\in{J_z}}\{\sum\limits_{t=p+1}^{\psi(i)-1} d_{\tau t}\}-\sum\limits_{t=p+1}^{i}d_{jt}, \ \ \phi(i)\le p, \ \ i\ge p+1. \\
	x_i+s_{j(i-1)}-d_{ji}, \ \ y_i=1, \ \ i\ge p+1. \\
	x_{\phi(i)}+s_{j(\phi(i)-1)}-\sum\limits_{t=\phi(i)}^{i}d_{jt}, \ \ y_i=0, \ \ \phi(i)\ge p+1.
	\end{array} 
	\right. 
	\end{eqnarray}
	
	if $ z_j=1 $, then $ s_{ji}=0 $ for $ i\in[p+1,T] $.
\end{proposition}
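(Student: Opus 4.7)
The plan is to prove the proposition by a classical exchange/shift argument, adapted to the two-stage chance-constrained setting: starting from any optimal solution of the deterministic equivalent, I transform it into one with the stated zero-inventory-ordering structure without increasing the objective, and then read off the formulas directly.

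First I would establish zero-inventory ordering in the first stage. Fix an optimal $(x,y,s,z)$ and suppose $s_{i-1}>0$ and $x_i>0$ for some $i\in[1,p]$. Then some earlier period $j<i$ has $x_j>0$ and supplies (at least) one unit of $s_{i-1}$. Shifting one unit of production from $j$ to $i$ changes the cost by $\alpha_i-\alpha_j-\sum_{t=j}^{i-1}h_t$, which is $\le 0$ by telescoping the first-stage Wagner--Whitin inequalities $\alpha_t+h_t\ge\alpha_{t+1}$. Iterating, we may assume $s_{i-1}x_i=0$ throughout $[1,p]$. Combined with the flow equations (7), this immediately yields $x_i=\sum_{t=i}^{\psi(i)-1}d_t$ and $s_i=\sum_{t=i+1}^{\psi(i)-1}d_t$ whenever $\psi(i)\le p$.

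The main obstacle is handling productions $x_i$ that have to cover stochastic demand beyond period $p$. For $i\le p$ with $\psi(i)\ge p+1$, zero-inventory ordering together with the absence of production on $[i+1,\psi(i)-1]$ gives $s_p=x_i-\sum_{t=i}^{p}d_t$, and the per-scenario constraint (8) for each $\tau\in J_z$ at $t=\psi(i)-1$ forces
\[
x_i\;\ge\;\sum_{t=i}^{p}d_t+\max_{\tau\in J_z}\sum_{t=p+1}^{\psi(i)-1}d_{\tau t}.
\]
For equality, I shift any excess unit from $i$ to $\psi(i)$; the resulting cost change is
\[
\alpha_{\psi(i)}-\alpha_i-\sum_{t=i}^{p}h_t-\Big(\sum_{j\in J_z}p_j\Big)\sum_{t=p+1}^{\psi(i)-1}h_t.
\]
Since $\sum_{j\in J_z}p_j\ge 1-\varepsilon$ by constraint (9), this is nonpositive once we telescope the first-stage inequalities on $[i,p]$ with the modified second-stage inequalities $\alpha_t+(1-\varepsilon)h_t\ge\alpha_{t+1}$ on $[p+1,\psi(i)-1]$. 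This is precisely where the $(1-\varepsilon)$ factor in Assumption~1 earns its keep, and it is the delicate step: one must verify that the probability-weighted holding savings dominate the production-cost difference uniformly in $J_z$.

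For a purely second-stage production ($i\ge p+1$), the same shift-forward argument runs entirely inside the second stage. The per-scenario constraint at $t=\psi(i)-1$ yields $x_i\ge\max_{\tau\in J_z}\{\sum_{t=i}^{\psi(i)-1}d_{\tau t}-s_{\tau(i-1)}\}$, and shifting any surplus to $\psi(i)$ is nonincreasing by the same probability-weighted telescoping, giving equality. The inventory formulas for $z_j=0$ then follow by iterating the scenario balance $s_{ji}=s_{j(i-1)}+x_i-d_{ji}$ through the three cases of the proposition (immediately after the stage boundary with $\phi(i)\le p$, right after an interior setup, and in a no-production interval after an interior setup). Finally, when $z_j=1$, constraint (10) collapses to $s_{ji}\ge 0$, and since $s_{ji}$ enters the objective with nonnegative coefficient $p_jh_i$, the optimal value is $s_{ji}=0$, completing the characterization.
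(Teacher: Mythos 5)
Your proposal is correct and follows essentially the same route as the paper's own proof: a forward production-shift (exchange) argument whose cost change telescopes the first-stage Wagner--Whitin inequalities and the probability-weighted second-stage inequalities (using $\sum_{j\in J_z}p_j\ge 1-\varepsilon$ from the chance constraint), with the lower bounds coming from feasibility of the per-scenario demand constraints and the inventory formulas then read off from the flow-balance recursions. The only cosmetic difference is that you phrase the first stage as establishing $s_{i-1}x_i=0$ while the paper directly normalizes each $x_k$ to the demand of its regeneration interval, and the paper is slightly more careful to note that when the telescoped cost change is exactly zero the shift can still be carried to completion without losing optimality.
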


Since the production level $ x_i $ and the inventory level in the second stage $ s_{ji} $ are not only expressed by demands, it is not a good formula to reconstruct the feasible region. Fortunately, we can prove the following proposition.

\begin{proposition}
	For the two-stage SLSCC problem, under Assumption 1, there exists an optimal production level of the form:\\
	if $ x_i> 0 $, then 
	\begin{eqnarray} 
	x_i &=& 
	\left\{ \begin{array}{lll} 
	\sum\limits_{t=i}^{\psi(i)-1}d_t, \ \ \psi(i)\le p. \\
	\sum\limits_{t=i}^{p}d_t+\max\limits_{\tau\in{J_z}}\{\sum\limits_{t=p+1}^{\psi(i)-1} d_{\tau t}\}, \ \ i\le p, \ \ \psi(i)\ge p+1. \\
	\max\limits_{\tau\in{J_z}}\{\sum\limits_{t=p+1}^{\psi(i)-1}d_{\tau t}\}-\max\limits_{\tau\in{J_z}}\{\sum\limits_{t=p+1}^{i-1} d_{\tau t}\}, \ \ i\ge p+1.
	\end{array} 
	\right. 
	\end{eqnarray}
	and an optimal inventory level of the form:
	\begin{eqnarray} 
	s_i &=& 
	\left\{ \begin{array}{lll} 
	\sum\limits_{t=i+1}^{\psi(i)-1}d_t, \ \ \psi(i)\le p. \\
	\sum\limits_{t=i+1}^{p}d_t+\max\limits_{\tau \in{J_z}}\{\sum\limits_{t=p+1}^{\psi(i)-1} d_{\tau t}\}, \ \ i\le p, \ \ \psi(i)\ge p+1.
	\end{array} 
	\right. 
	\end{eqnarray}
	
	if $ z_j=0 $, then
	\begin{eqnarray} 
	s_{ji} &=&  
	\max\limits_{\tau\in{J_z}}\{\sum\limits_{t=p+1}^{\psi(i)-1} d_{\tau t}\}-\sum\limits_{t=p+1}^{i}d_{jt}, \quad i\in[p+1,T].
	\end{eqnarray}
	
	if $ z_j=1 $, then $ s_{ji}=0 $ for $ i\in[p+1,T] $.	
	
\end{proposition}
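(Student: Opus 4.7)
The plan is to deduce Proposition 2 directly from Proposition 1 by eliminating the recursive dependence on $s_{\tau(i-1)}$ in the third case of the production formula and by collapsing the three cases of the second-stage inventory formula into a single closed form. The first-stage pieces ($x_i, s_i$ for $i \le p$, and the case $\psi(i) \le p$) are syntactically identical in both propositions, so nothing needs to be done there; all the work concerns $i \ge p+1$ and $z_j = 0$.

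The central identity I would establish is that, in any optimal solution of the form described in Proposition 1,
\[
s_p + \sum_{t=p+1}^{i-1} x_t \;=\; \max_{\tau \in J_z}\Bigl\{\sum_{t=p+1}^{i-1} d_{\tau t}\Bigr\} \qquad \text{whenever } y_i = 1,\; i \ge p+1.
\]
The inequality $\ge$ is just material balance together with the nonnegativity of $s_{\tau(i-1)}$ for every occurred $\tau \in J_z$. For the reverse inequality I would invoke Assumption 1: if every $s_{\tau(i-1)}$ with $\tau \in J_z$ were strictly positive, then iterating the cost inequality $\alpha_i + (1-\varepsilon) h_i \ge \alpha_{i+1}$ (and its first-stage analogue when needed) would let us postpone a small amount of production from an earlier setup at $\phi(i)$ to period $i$ without altering $z$ or violating feasibility, with a non-positive change in cost; choosing an optimum in which inventories are as small as possible then forces at least one occurred scenario to exhaust its inventory at $i-1$, pinning the maximum.

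Given this identity, the third case of Proposition 1 rewrites as
\[
x_i = \max_{\tau \in J_z}\Bigl\{\sum_{t=i}^{\psi(i)-1} d_{\tau t} - s_{\tau(i-1)}\Bigr\} = \max_{\tau \in J_z}\Bigl\{\sum_{t=p+1}^{\psi(i)-1} d_{\tau t}\Bigr\} - \Bigl(s_p + \sum_{t=p+1}^{i-1} x_t\Bigr),
\]
and the parenthesized term becomes $\max_{\tau \in J_z}\{\sum_{t=p+1}^{i-1} d_{\tau t}\}$ by the identity, matching the Proposition 2 expression. For the unified inventory formula I would induct on $i \in [p+1,T]$, treating the three cases of Proposition 1 in turn: when $y_i = 1$, combine $s_{ji} = x_i + s_{j(i-1)} - d_{ji}$ with the inductive hypothesis and the production formula just derived, so the $\max$ terms telescope; when $y_i = 0$ and $\phi(i) \ge p+1$, use $\psi(\phi(i)) = \psi(i)$ and $s_{ji} = s_{j\phi(i)} - \sum_{t=\phi(i)+1}^{i} d_{jt}$ together with the inductive hypothesis; when $\phi(i) \le p$, the absence of second-stage setups on $[p+1, i-1]$ together with the identity applied at $\psi(i)$ (or at $i$, if $y_i = 1$) forces $s_p + \sum_{t=p+1}^{\psi(i)-1} x_t = \max_\tau\{\sum_{t=p+1}^{\psi(i)-1} d_{\tau t}\}$, and the inventory formula drops out directly.

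The main obstacle I anticipate is the rigorous justification of the zero-inventory identity: Assumption 1 only gives the cost inequality weakened by the factor $(1-\varepsilon)$, and the exchange argument must simultaneously handle every occurred scenario while leaving the indicator vector $z$ untouched; some care is also needed to argue that the postponed production preserves the chance constraint, since it only modifies period-$i$ values common to every active scenario. Once the identity is in hand the remaining substitutions are algebraic and the case analysis for the inventory formula is routine.
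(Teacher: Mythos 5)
Your overall skeleton matches the paper's: both proofs reduce everything to a ``zero inventory at each second-stage setup'' identity and then telescope, finishing the unified inventory formula by induction over the setup periods $i_1<i_2<\cdots<i_v$. The one real divergence is how you justify the central identity $s_p+\sum_{t=p+1}^{i-1}x_t=\max_{\tau\in J_z}\{\sum_{t=p+1}^{i-1}d_{\tau t}\}$ at setup periods. You treat its ``$\le$'' direction as the hard part and propose a fresh exchange argument under Assumption 1 (postponing production from $\phi(i)$ to $i$), which you yourself flag as the main obstacle. That detour is unnecessary: the identity is already an algebraic consequence of Proposition 1. Indeed, Eq.~(14) gives $s_p=\max_{\tau\in J_z}\{\sum_{t=p+1}^{\psi(l)-1}d_{\tau t}\}$, which is the base case $i=i_1$ since no production occurs in $[p+1,i_1-1]$; and for the inductive step, material balance gives $s_{\tau(i-1)}=s_p+\sum_{t=p+1}^{i-1}(x_t-d_{\tau t})$, so the third case of Eq.~(13) reads $x_{\phi(i)}=\max_{\tau}\{\sum_{t=p+1}^{i-1}d_{\tau t}\}-(s_p+\sum_{t=p+1}^{\phi(i)-1}x_t)$, i.e.\ $\min_{\tau\in J_z}s_{\tau(i-1)}=0$ is built into the $\max$ structure of the production formula. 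This is exactly what the paper does: it proves the two statements (its Eqs.~(65)--(66), your identity and the resulting production formula) simultaneously by induction on $k$, with the base case read off from the $\phi(i)\le p$ branch of Eq.~(15). Redoing the exchange argument here would not only be redundant with the proof of Proposition~1 but, as sketched, is the weakest part of your write-up (the ``choose an optimum with inventories as small as possible'' step is not pinned down). The rest of your plan --- the telescoping rewrite of $x_i$ and the three-case induction for $s_{ji}$ --- is the same routine algebra the paper carries out, and is fine. One caveat both arguments share: the formula in Eq.~(13) is stated only for $x_i>0$, so the induction implicitly assumes $x_{i_k}>0$ at every setup period; a sentence disposing of setups with zero production would tighten either proof.
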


Define function $ [x]^{+} $ as $ [x]^{+}=\max\{0,x\} $, for $ x\in R $, then we get better expressions shown in the next proposition. We omit the proof because it is easy to testify the validity with the results of former propositions.
\begin{proposition}
	For the two-stage SLSCC problem, under Assumption 1, there exists an optimal production level of the form:
	\begin{eqnarray} 
	x_i &=& 
	\max_{1\le\tau\le m}\{\sum\limits_{t=i}^{T}d_{\tau t}[y_i-\sum\limits_{k=i+1}^{t}y_k-z_\tau]^{+}-s_{\tau(i-1)}\}, \quad i\in[p+1,T].
	\end{eqnarray}\\
	and an optimal inventory level of the form:
	\begin{eqnarray} 
	s_i &=& 
	\sum\limits_{t=i+1}^{p}d_t[1-\sum\limits_{k=i+1}^{t}y_k]^{+}+\max_{1\le\tau\le m}\{\sum\limits_{t=p+1}^{T} d_{\tau t}[1-\sum\limits_{k=i+1}^{t}y_k-z_\tau]^{+}\}, \quad i\in[1,p].
	\end{eqnarray}
	
	\begin{align} 
	s_{ji} &=&  
	\max_{1\le\tau\le m}\{\sum\limits_{t=p+1}^{i} d_{\tau t}(1-z_\tau-z_j)+\sum\limits_{t=i+1}^{T}d_{\tau t}[1-\sum\limits_{k=i}^{t}y_k-z_\tau-z_j]^{+}\}-\sum\limits_{t=p+1}^{i}d_{jt}(1-z_j), \quad i\in[p+1,T].
	\end{align}		
\end{proposition}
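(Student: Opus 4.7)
My plan is to derive Proposition~3 from Proposition~2 by direct algebraic simplification: verify that each unified $[\cdot]^+$-expression in Proposition~3 collapses to the appropriate piecewise branch of Proposition~2. The natural case split is driven by the setup indicator $y_i$ (and its analogue $1-z_j$), together with whether a scenario index $\tau$ belongs to $J_z=\{\tau:z_\tau=0\}$.

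For the second-stage production expression, I would first observe that, since $y_i,y_k,z_\tau\in\{0,1\}$, the quantity $[y_i-\sum_{k=i+1}^{t}y_k-z_\tau]^+$ is $0/1$-valued and equals $1$ exactly when $y_i=1$, $z_\tau=0$, and no setup occurs strictly between $i$ and $t$, i.e.\ $t<\psi(i)$. Hence for $\tau\in J_z$ the inner sum collapses to $\sum_{t=i}^{\psi(i)-1}d_{\tau t}$ when $y_i=1$, and to $0$ when $y_i=0$. Substituting the Proposition~2 expression for $s_{\tau(i-1)}$ and using $\psi(i-1)=i$ (because $y_i=1$) yields
\[
\sum_{t=i}^{\psi(i)-1}d_{\tau t}-s_{\tau(i-1)}
=\sum_{t=p+1}^{\psi(i)-1}d_{\tau t}-\max_{\tau'\in J_z}\Bigl\{\sum_{t=p+1}^{i-1}d_{\tau' t}\Bigr\},
\]
and taking the maximum over $\tau\in J_z$ recovers precisely the third branch of Proposition~2. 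For $\tau\notin J_z$ the bracket vanishes and $s_{\tau(i-1)}=0$, so such $\tau$ contribute only $0$ to the outer maximum, dominated by the nonnegative value above. When $y_i=0$ every bracket vanishes and the outer maximum equals $0$, matching $x_i=0$.

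The inventory formulas follow by the same pattern. For $s_i$ in the first stage, $[1-\sum_{k=i+1}^{t}y_k]^+$ serves as the indicator of $t<\psi(i)$, so the two sums collapse to the two branches of Proposition~2 depending on whether $\psi(i)\le p$ or $\psi(i)\ge p+1$, with contributions from $\tau\notin J_z$ absorbed harmlessly into the outer maximum. For $s_{ji}$ the argument splits on $z_j$: when $z_j=1$ every bracket is nonpositive and the linear term $\sum_{t=p+1}^{i}d_{jt}(1-z_j)$ vanishes, leaving $s_{ji}=0$; when $z_j=0$ the interior reproduces the Proposition~2 formula restricted to $\tau\in J_z$, with the extra $-z_j$ inside the brackets again forcing the $\tau\notin J_z$ terms to be dominated.

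The main obstacle, and the place that deserves the most care, is controlling the outer maximum: one must verify in every case that the $\tau\notin J_z$ (and, for $s_{ji}$, the $\tau=j$ when $z_j=1$) entries never strictly exceed the $J_z$-entries, and that the boundary conventions remain coherent when $\psi(i)$ is undefined (no further setup exists, so $\sum_{k=i+1}^{t}y_k\equiv 0$ and the bracket naturally extends the active range up to $T$). Once these sign and boundary checks are made, the verification is essentially bookkeeping against Propositions~1 and~2, which is why the authors are comfortable omitting it.
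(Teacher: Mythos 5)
Your proposal is correct and coincides with what the paper does: the authors explicitly omit the proof of Proposition 3 on the grounds that it is a routine verification against Propositions 1 and 2, and your case analysis on $y_i$, $z_\tau$, $z_j$ and the location of $\psi(i)$ is exactly that verification. One small point worth recording: your reduction of the $s_{ji}$ formula to Proposition 2 tacitly reads the inner sum as $\sum_{k=i+1}^{t}y_k$ rather than the printed $\sum_{k=i}^{t}y_k$ (the latter would kill the tail terms whenever $y_i=1$), which is the reading consistent with Proposition 5 and is evidently the intended one.
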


Using the relations $ x_i=d_i+s_i-s_{i-1},\ i\in[1,p] $, we can eliminate $ x_i,\ i\in[1,p] $, in the objective function, and with the result of Proposition 3, there is a natural way to construct a formulation which may be easier to compute, as follows (N-SLSCC):

\begin{align}
&\min\ \ \sum\limits_{i=1}^{p-1}(\alpha_i+h_i-\alpha_{i+1})s_i+(\alpha_p+h_p)s_p+\sum\limits_{i=1}^{p}\beta_i y_i+\sum\limits_{i=p+1}^{T}(\alpha_i x_i+\beta_i y_i)+\sum\limits_{j=1}^{m}(p_j\sum\limits_{i=p+1}^{T}h_i s_{ji})+\sum\limits_{i=1}^{p}\alpha_i d_i \nonumber \\
&s_i\ge \sum\limits_{t=i+1}^{\nu}d_t(1-\sum\limits_{k=i+1}^{t}y_k),\ \ i\in[1,p],\quad \nu\in [i+1,p] \\
&s_i\ge \sum\limits_{t=i+1}^{p}d_t(1-\sum\limits_{k=i+1}^{t}y_k)+\sum\limits_{t=p+1}^{\nu}d_{\tau t}(1-\sum\limits_{k=i+1}^{t}y_k-z_\tau),\quad 
i\in[1,p],\quad \tau\in\Omega,\quad \nu\in[p+1,T]\\
&x_i\ge \sum\limits_{t=i}^{\nu}d_{\tau t}(y_i-\sum\limits_{k=i+1}^{t}y_k-z_\tau)-s_{\tau(i-1)},\quad i\in[p+1,T],\quad \tau\in\Omega,\quad \nu\in[i,T]\\
&s_{ji}\ge \sum\limits_{t=p+1}^{i}d_{\tau t}(1-z_\tau-z_j)+\sum\limits_{t=i+1}^{\nu}d_{\tau t}(1-\sum\limits_{k=i}^{t}y_k-z_\tau-z_j)-\sum\limits_{t=p+1}^{i}d_{jt}(1-z_j), \nonumber \\
&\qquad\qquad\qquad\qquad\qquad\qquad\qquad\qquad\qquad\qquad i\in[p+1,T],\quad \tau,\ j\in\Omega,\quad \nu\in[i+1,T]\\
&\sum\limits_{j=1}^{m}p_j z_j\le \varepsilon\\
&x_i, s_i \in\mathbb{R}_+,\ s_{ji}\in \mathbb{R}_+, \ y_i\in\{0, 1\},z_j\in\{0,1\}
\end{align}

\begin{remark}
	In fact, we can omit $ s_i, s_{ji}\in \mathbb{R}_+ $ in constraint $ (27) $, because constraints $ (22)(23(25) $ with $ y_i,z_j\in\{0,1\} $ will insure $ s_i, s_{ji}\in \mathbb{R}_+ $ naturally by Proposition 3. However, when solving the linear relaxation of N-SLSCC, the optimal $ s_{ji} $ may be negative even when optimal $ z_j\ne 1 $ for some $ i\in [p+1,T] $, so we retain $ s_i, s_{ji}\in \mathbb{R}_+ $ in this formulation.
\end{remark}

\begin{proposition}
	Under Assumption 1, the optimal solutions of formulation N-SLSCC are also the optimal solutions of the original two-stage SLSCC problem.		
\end{proposition}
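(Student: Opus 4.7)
The plan is to prove the claim by establishing the two inequalities $\mathrm{opt}(\text{N-SLSCC}) \le \mathrm{opt}(\text{SLSCC})$ and $\mathrm{opt}(\text{SLSCC}) \le \mathrm{opt}(\text{N-SLSCC})$, in both cases exploiting the structural description of optimal solutions from Propositions 1--3.

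For the forward direction, I would take an optimal solution $(x^*,y^*,s^*,z^*,s^{2*}_j)$ of the original SLSCC already in the structural form guaranteed by Proposition 3. Feasibility for N-SLSCC then reduces to a termwise check of (22)--(25): each right-hand side is exactly one of the expressions sitting inside the $\max_\tau\{\cdot\}$ in Proposition 3, so the ``$\ge$'' holds immediately, while constraint (26) and the nonnegativity/integrality conditions are inherited from the SLSCC solution. A short algebraic rewriting using $x_i = d_i + s_i - s_{i-1}$ for $i \in [1,p]$ (with $s_0 := 0$) shows that the N-SLSCC objective at this solution equals the SLSCC objective, giving the first inequality.

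For the reverse direction, let $(\bar x, \bar y, \bar s, \bar z, \bar s^2_j)$ be optimal for N-SLSCC and define $\bar x_i := d_i + \bar s_i - \bar s_{i-1}$ for $i \in [1,p]$. Under Assumption 1 the coefficients $\alpha_i + h_i - \alpha_{i+1}$, $\alpha_p + h_p$, the $\alpha_i$ for $i\ge p+1$, and the $h_i$ are all nonnegative, and $\bar s_i$, $\bar x_i$ ($i \ge p+1$), and $\bar s_{ji}$ appear only on the left-hand side of the ``$\ge$'' constraints (22)--(25). Hence at any optimum each of these variables equals the maximum of its right-hand sides over the relevant $(\tau,\nu)$ pairs, which is precisely the closed form prescribed by Propositions 2--3.

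The main obstacle is then showing that this reconstruction yields a feasible SLSCC solution of matching cost. The delicate step is verifying $\bar x_i \ge 0$ and the setup linkage $\bar x_i \le M_i \bar y_i$ for $i \in [1,p]$, since N-SLSCC contains no explicit $x \le My$ inequality in the first stage. I plan to substitute the tight form of $\bar s_i$ from Proposition 2 into $\bar x_i = d_i + \bar s_i - \bar s_{i-1}$: the sums telescope and reproduce exactly the piecewise formula for $x_i$ in Proposition 2, which vanishes when $\bar y_i = 0$ and is bounded by $M_i$ when $\bar y_i = 1$. The remaining SLSCC constraints (8) and (10) are recovered from the worst-case instances (over $\nu$ and $\tau$) of (24) and (25), with the case $\bar z_j = 1$ absorbed by the nonnegativity in (27). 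Reversing the objective substitution then delivers the second inequality and closes the proof.
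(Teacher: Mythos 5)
The paper states Proposition 4 without proof, so there is no official argument to compare against; your proposal is essentially the argument the paper leaves implicit (N-SLSCC is built by linearizing the $[\cdot]^{+}$ expressions of Proposition 3 into families of inequalities indexed by $(\tau,\nu)$, and Assumption 1 makes the objective coefficients of $s_i$, $x_i$, $s_{ji}$ nonnegative so that an optimum sits at the lower envelope). Your forward direction and the objective-substitution bookkeeping are correct, and your identification of the first-stage linkage $0\le \bar x_i\le M_i\bar y_i$ as the delicate step is exactly right: the telescoping of the tight $\bar s_i$ expressions is what forces $\bar x_i=0$ when $\bar y_i=0$.

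One caveat in the reverse direction: you assert that \emph{at any optimum} each of $\bar s_i$, $\bar x_i$ ($i\ge p+1$), $\bar s_{ji}$ equals the maximum of its right-hand sides. Assumption 1 only guarantees the coefficients $\alpha_i+h_i-\alpha_{i+1}$ (and $p_jh_i$, $\alpha_i$) are \emph{nonnegative}, not strictly positive; if, say, $\alpha_{i}+h_{i}-\alpha_{i+1}=0$ for some $i<p$, an optimal N-SLSCC solution may carry slack in $\bar s_{i}$, and then $\bar x_{i+1}=d_{i+1}+\bar s_{i+1}-\bar s_{i}$ can be negative, or positive with $\bar y_{i+1}=0$, breaking feasibility of the reconstructed SLSCC point. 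The repair is routine — lower each such variable to the maximum of its right-hand sides, which cannot increase the objective, and run your argument on that modified optimum — which establishes equality of optimal values and that the canonical optimal solution of N-SLSCC maps to an optimal SLSCC solution (which is all the proposition can really mean; read literally as ``all optimal solutions'' it fails in the same degenerate cases). With ``any optimum'' weakened to ``some optimum of equal value,'' your proof is sound.
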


Now we obtain a mixed integer programming which can produce the optimal solutions of the original problem. However, not like many previous lot-sizing problems, maybe the new formulation N-SLSCC is not equivalent to its linear relaxation. While we can expect it more efficient than the original formulation because of the relation between N-SLSCC and the expressions of opimal solutions by Proposition 3. 
Except that, we found that a kind of subproblem can be solved in polynomial time, which we introduce as following.

\subsection{Description of $ S $-subproblem}
Let $ \mathcal{S} $ be the family of possible occurred scenarios set, i.e., $ \mathcal{S}=\{S\mid S\subseteq \Omega,\ \sum\limits_{j\in S}p_j(1-z_j)\ge 1-\varepsilon\} $, $ D_{ji} $ be the cumulant of demands from period $ p+1 $ to $ i $ of scenario $ j $, for $ i\in[p+1,T] $, $ j\in\Omega $, i.e., $ D_{ji}= \sum\limits_{t=p+1}^i d_{jt}$, $ d_i^S $ be the maximum among cumulants of demands from period $ p+1 $ to $ i $ of scenario $ j $, for $ j\in S $ and $ S\in\mathcal{S} $, i.e., $ d_i^S=\max\limits_{j\in S}D_{ji} $. By the definition of $ \mathcal{S} $, for every $ S\in \mathcal{S},\ z_j=0$, when $j\in S,\ z_j=1$, when $j\in \Omega\setminus S $ is a possible case that satisfies the chance constraint. Thereby, for any $ S\in \mathcal{S} $, we can define the related subproblem by restricting $ z_j=0 $ for $ j\in S $, and $ z_j=1 $ otherwise in the original deterministic equivalent formulation, which is called $ S $-subproblem. Let $ opt(*) $ be the optimal value of the original problem, and $ opt(S) $ be the optimal value of $ S $-subproblem, then it is easy to find $ opt(*)=\min\limits_{S\in\mathcal{S}}opt(S) $. Therefore, it is meaningful to study the character of $ S $-subproblem.

Define $\delta_{p+1}^S=d_{p+1}^S$, $\delta_i^S=d_i^S-d_{i-1}^S$, $i\in[p+2,T]$. Considering the conclusion of optimality condition of original problem, we can give a similar one for the $ S $-subproblem. 

\begin{proposition}
	For the $ S $-subproblem of two-stage SLSCC problem, under Assumption 1, there exists an optimal production level of the form:
	\begin{eqnarray} 
	x_i &=& 
	\sum\limits_{t=i}^T \delta_t^S[y_i-\sum\limits_{k=i+1}^t y_k]^+, \quad i\in[p+1,T].
	\end{eqnarray}\\
	and an optimal inventory level of the form:
	\begin{eqnarray} 
	s_i &=& 
	\sum\limits_{t=i+1}^{p}d_t[1-\sum\limits_{k=i+1}^{t}y_k]^{+}+\sum\limits_{t=p+1}^T \delta_t^S[1-\sum\limits_{k=i+1}^t y_k]^+, \quad i\in[1,p].
	\end{eqnarray}
	if $ j\in S $, then
	\begin{eqnarray} 
	s_{ji} &=&  
	\sum\limits_{t=p+1}^i \delta_t^S + \sum\limits_{t=i+1}^T \delta_t^S [1-\sum\limits_{k=i+1}^t y_k]^+ -D_{ji}, \quad i\in[p+1,T].
	\end{eqnarray}
	otherwise, $ s_{ji}=0 $.
\end{proposition}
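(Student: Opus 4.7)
The plan is to derive Proposition 5 as a direct specialization of Proposition 4 to the case where the indicator vector $z$ is pinned down by $z_\tau = 0$ for $\tau \in S$ and $z_\tau = 1$ for $\tau \in \Omega \setminus S$. By the defining condition $\sum_{j \in S} p_j(1 - z_j) \ge 1 - \varepsilon$ of $\mathcal{S}$, this $z$ satisfies the chance constraint $\sum_j p_j z_j \le \varepsilon$, so the optimality forms of Proposition 4 apply with $J_z = S$; moreover the Wagner--Whitin shifting argument underlying Propositions 1--3 is insensitive to whether $z$ is free or fixed, so it gives exactly the $S$-subproblem optimum when $z$ is held at this value.

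First I would verify that scenarios $\tau \in \Omega \setminus S$ contribute nothing to the three maxima of Proposition 4. Substituting $z_\tau = 1$, the brackets $[y_i - \sum_{k=i+1}^t y_k - 1]^+$ and $[1 - \sum_{k=i+1}^t y_k - 1]^+$ vanish because $y_i,\ 1 - \sum_{k=i+1}^t y_k \le 1$, while in the $s_{ji}$ expression (for $j \in S$, so $z_j = 0$) the prefactor $(1 - z_\tau - z_j)$ is zero and the accompanying bracket is also zero. The last clause of Proposition 3 then gives the ``otherwise'' case $s_{ji} = 0$ when $j \notin S$. Hence each maximum collapses to a maximum over $\tau \in S$.

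Next I would invoke the telescoping identity $\max_{\tau \in S} D_{\tau \nu} = d_\nu^S = \sum_{t=p+1}^\nu \delta_t^S$. For $s_i$ with $i \in [1,p]$, the bracket $[1 - \sum_{k=i+1}^t y_k]^+$ is $1$ precisely on the prefix $t \in [p+1, \psi(i) - 1]$, so the inner sum equals $D_{\tau, \psi(i)-1}$; taking the max over $\tau \in S$ and then writing $d_{\psi(i)-1}^S = \sum_{t=p+1}^T \delta_t^S[1 - \sum_{k=i+1}^t y_k]^+$ delivers the claimed $s_i$ formula (the deterministic first-stage sum is unchanged). Once the $x_i$ and $s_p$ formulas are in hand, the $s_{ji}$ formula for $j \in S$ follows from the inventory balance $s_{ji} = s_p + \sum_{t=p+1}^i(x_t - d_{jt})$ combined with the same telescoping.

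The main obstacle is the production formula for $i \in [p+1, T]$: after specialization Proposition 4 gives, when $y_i = 1$, $x_i = \max_{\tau \in S}\{D_{\tau, \psi(i)-1} - D_{\tau, i-1} - s_{\tau(i-1)}\}$, and in general the max of a difference is not the difference of maxes. The way through is to use inventory balance on chosen scenarios, $s_{\tau(i-1)} = s_p + \sum_{t=p+1}^{i-1} x_t - D_{\tau, i-1}$, which eliminates the $\tau$-dependent subtraction and leaves $x_i = d_{\psi(i)-1}^S - s_p - \sum_{t=p+1}^{i-1} x_t$. A short induction along the successive setup periods $p < i_1 < i_2 < \cdots$ in $[p+1,T]$, seeded by $s_p = d_{\psi(p)-1}^S = d_{i_1 - 1}^S$ from the first-stage formula evaluated at $i = p$, preserves the invariant $s_p + \sum_{t=p+1}^{i_k - 1} x_t = d_{i_k - 1}^S$, and therefore $x_{i_k} = d_{\psi(i_k)-1}^S - d_{i_k - 1}^S = \sum_{t=i_k}^{\psi(i_k)-1} \delta_t^S$. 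For non-setup periods both $x_i$ and the claimed expression vanish, completing the argument.
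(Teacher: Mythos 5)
Your proposal is correct and follows essentially the same route as the paper: specialize the optimality forms of the earlier propositions to the fixed indicator vector $z_\tau=0$ for $\tau\in S$, $z_\tau=1$ otherwise, and then apply the telescoping identity $\max_{\tau\in S}D_{\tau\nu}=d_\nu^S=\sum_{t=p+1}^{\nu}\delta_t^S$. The only divergence is at the production level: the paper sidesteps your ``max of differences versus difference of maxes'' obstacle by citing Proposition 2 directly (whose statement is already a difference of maxima), whereas you re-derive that fact via inventory balance and an induction along the setup periods $i_1<i_2<\cdots$ --- which is precisely the argument the paper uses in the appendix to prove Proposition 2, so no new content is needed and nothing is lost.
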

\begin{proof}{Proof}
	We only need to prove Eq.(28)-(30). With $ z_j=0 $ for $ j\in S $ and $ z_j=1 $ otherwise, by Proposition 2,  for $ i\in[p+1,T] $ and $ y_i=1 $, we have $ x_i^S=d_{\psi(i)-1}^S-d_{i-1}^S=\sum\limits_{t=i}^{\psi(i)-1}(d_{t}^S-d_{t-1}^S)=\sum\limits_{t=i}^{\psi(i)-1}\delta_t^S=\sum\limits_{t=i}^T \delta_t^S[y_i-\sum\limits_{k=i+1}^t y_k]^+ $, then Eq.(28) holds. Observing that for $ i\in[1,p] $, $ \max\limits_{\tau\in S}\{\sum\limits_{t=p+1}^{T} d_{\tau t}[1-\sum\limits_{k=i+1}^{t}y_k]^{+}\}=\max\limits_{\tau\in S}D_{\tau(\psi(i)-1)}=d_{\psi(i)-1}^S=\sum\limits_{t=p+1}^{\psi(i)-1}\delta_t^S=\sum\limits_{t=p+1}^T \delta_t^S[1-\sum\limits_{k=i+1}^t y_k]^+ $, then by Proposition 3, Eq.(29) holds. For $ i\in[p+1,T] $ and $ j\in S $, also by Proposition 3, $ s_{ji}=\max\limits_{\tau\in S}\{\sum\limits_{t=p+1}^{i} d_{\tau t}+\sum\limits_{t=i+1}^{T}d_{\tau t}[1-\sum\limits_{k=i}^{t}y_k]^{+}\}-D_{ji}=\max\limits_{\tau\in S}D_{\tau(\psi(i)-1)}-D_{ji}=d_{\psi(i)-1}^S-D_{ji}=\sum\limits_{t=p+1}^{\psi(i)-1}\delta_t^S-D_{ji}=\sum\limits_{t=p+1}^i \delta_t^S+\sum\limits_{t=i+1}^T \delta_t^S[1-\sum\limits_{k=i+1}^t y_k]^+ -D_{ji} $, so Eq.(30) holds. 
	
\end{proof}

With the result of Proposition 5, an equivalent MIP formulation of the $ S $-subproblem can be described as:
\begin{align}
&\min\sum\limits_{i=1}^{p-1}(\alpha_i+h_i-\alpha_{i+1})s_i^S+(\alpha_p+h_p)s_p^S+\sum\limits_{i=1}^{p}\beta_i y_i+\sum\limits_{i=p+1}^{T}(\alpha_i x_i^S+\beta_i y_i)+\sum\limits_{j=1}^{m}(p_j\sum\limits_{i=p+1}^{T}h_i s_{ji}^S)+\sum\limits_{i=1}^{p}\alpha_i d_i\nonumber\\
&s_i^S\ge \sum\limits_{t=i+1}^\nu d_t(1-\sum\limits_{k=i+1}^t y_k), \quad i\in [1,p], \quad \nu\in [i+1,p]\\
&s_i^S\ge \sum\limits_{t=i+1}^p d_t(1-\sum\limits_{k=i+1}^t y_k) +\sum\limits_{t=p+1}^\nu \delta_t^S(1-\sum\limits_{k=i+1}^t y_k), \quad i\in [1,p],\quad \nu\in [p+1,T]\\
&x_i^S\ge\sum\limits_{t=i}^\nu \delta_t^S (y_i-\sum\limits_{k=i+1}^t y_k), \quad i\in [p+1,T], \quad \nu\in[i,T]\\
&s_{ji}^S\ge\sum\limits_{t=p+1}^i \delta_t^S+\sum\limits_{t=i+1}^\nu \delta_t^S(1-\sum\limits_{k=i+1}^t y_k)-D_{ji}, \quad i\in [p+1,T],\quad \nu\in[i+1,T], \quad j\in S\\
&s_{ji}^S=0,\quad i\in [p+1,T],\quad j\in \Omega\setminus S\\
&y_i\in\{0,1\},\quad i\in N
\end{align}

Observing that for any $ S\in \mathcal{S} $, $ x_i^S=d_{ji}+s_{ji}^S -s_{j(i-1)}^S,\ i\in[p+1,T],\ j\in S $ (assume $ s_{jp}^S=s_p^S, j\in\Omega $), therefore, we obtain $ x_i^S=\frac{1}{\sum\limits_{j\in S}p_j}[\sum\limits_{j \in S }p_j(d_{ji}+s_{ji}^S-s_{j(i-1)}^S)],\ i\in[p+1,T] $. Substitute former equations into the objective function , then we get a simpler but equivalent formulation as:
\begin{align}
&\min \sum\limits_{i=1}^{T}\beta_i y_i+\sum\limits_{i = 1}^p h_i^\prime s_i^S + \sum\limits_{i = p + 1}^T \sum\limits_{j=1}^m h_{ji}^S s_{ji}^S + \sum\limits_{i = p + 1}^T r_i^S \nonumber\\
&s_i^S\ge \sum\limits_{t=i+1}^\nu d_t(1-\sum\limits_{k=i+1}^t y_k),\quad i\in [1,p],\quad \nu\in[i+1,p]\\
&s_i^S\ge \sum\limits_{t=i+1}^p d_t(1-\sum\limits_{k=i+1}^t y_k) +\sum\limits_{t=p+1}^\nu \delta_t^S(1-\sum\limits_{k=i+1}^t y_k), \quad i\in [1,p],\quad \nu\in[p+1,T]\\
&s_{ji}^S\ge \sum\limits_{t=p+1}^i \delta_t^S+\sum\limits_{t=i+1}^\nu \delta_t^S(1-\sum\limits_{k=i+1}^t y_k) -D_{ji}, \quad i\in [p+1,T],\quad j\in S, \quad \nu\in [i+1,T]\\
&s_{ji}^S=0,\quad i\in [p+1,T],\quad j\in \Omega\setminus S\\
&y_i\in\{0,1\},\quad i\in N
\end{align}
where for any $ S\in\mathcal{S} $\\
\begin{center}
	$ h_i^\prime =\alpha_i+h_i-\alpha_{i+1},\quad i\in[p+1,T-1],\quad j\in S .$\\
	\begin{eqnarray} 
	h_{ji}^S &=& 
	\left\{ \begin{array}{lll} 
	\frac{p_j}{\sum\limits_{j\in S}p_j}(\alpha_i+\sum\limits_{j\in S}p_j h_i-\alpha_{i+1}), \quad i\in[p+1, T-1],\quad j\in S \\
	\frac{p_j}{\sum\limits_{j\in S}p_j}(\alpha_T+\sum\limits_{j\in S}p_j h_T), \quad i=T,\quad j\in S\\
	p_j h_i, \quad i\in[p+1, T],\quad j\in \Omega\setminus S.
	\end{array} 
	\right. 
	\end{eqnarray} \\
	\begin{eqnarray}
	r_i^S&=&
	\left\{ \begin{array}{ll}
	\alpha_i d_i,\quad i\in[1,p]\\
	\frac{\alpha_i}{\sum\limits_{j\in S}p_j}\sum\limits_{j\in S}p_jd_{ji},\quad i\in [p+1, T].
	\end{array}
	\right.
	\end{eqnarray}	
\end{center}
As for any $ S\in \mathcal{S} $, $ \sum\limits_{j\in S}p_j\ge 1-\varepsilon $, then according to Assumption 1 we know all $ h_i^\prime $, $ h_{ji}^S $, $ r_i^S $ are nonnegative. 

In fact, the $ S $-subproblem has a very good performance, that is, the optimal solutions of its linear relaxition are the optimal solutions of itself as well. We will prove the conclusion through an extended formulation.

Let $ u_{it}=1 $ if $ s_i $ contains $ d_t $ for $ i+1\le t\le p $ or $ s_i $ contains $ \delta_t^S $ for $ i\le p<t\le T $ or $ s_{ji} $ cantains $ \delta_t^S $ for $ p+2\le i+1\le t\le T $, and $ u_{it}=0 $ otherwise. We consider an extended formulation
\begin{align}
&\min \sum\limits_{i=1}^{T}\beta_i y_i+\sum\limits_{i = 1}^p h_i^\prime s_i^S + \sum\limits_{i = p + 1}^T \sum\limits_{j=1}^m h_{ji}^S s_{ji}^S + \sum\limits_{i = p + 1}^T r_i^S\\
&s_i^S=\sum\limits_{t=i+1}^p d_t u_{it} +\sum\limits_{t=p+1}^T \delta_t^S u_{it}, \quad i\in [1,p]\\
&s_{ji}^S=\sum\limits_{t=p+1}^i \delta_t^S+\sum\limits_{t=i+1}^T \delta_t^S u_{it} -D_{ji}, \quad i\in [p+1,T],\quad j\in S\\
&s_{ji}^S=0,\quad i\in [p+1,T],\quad j\in \Omega\setminus S\\
&u_{it}\ge 1-\sum\limits_{k=i+1}^t y_k,\quad i\in N,\quad t\in[i+1,T]\\
&u_{it}\ge0,\quad 0\le y_i\le 1,\quad i\in N,\quad t\in[i+1,T]\\
&y_i\ \ integer,\quad i\in N
\end{align}

As the study of uncapacitated lot-sizing problem with Wager-Whitin costs in \citet*{8Pochet1994}, we can get a similar conclusion as follows:
\begin{proposition}
	The constraint matrix corresponding to the constraints (48) (49) is totally unimodular. Then the linear program (44)-(49) is an extended formulation for $ S $-subproblem.
\end{proposition}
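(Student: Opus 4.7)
The plan is to verify total unimodularity (TU) of the coefficient matrix of $(48)$ and then exploit it, together with integrality of the right-hand sides, to conclude that the LP relaxation in which $y_i\in\{0,1\}$ is replaced by $y_i\in[0,1]$ admits an integral optimum; projecting such an optimum back via $(45)$--$(47)$ will recover exactly the solution structure of the $S$-subproblem described by Proposition~5.

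First I would rewrite $(48)$ in the form
\[
u_{it}+\sum_{k=i+1}^{t}y_k\;\ge\;1,\qquad i\in N,\ t\in[i+1,T],
\]
and order the columns as $y_1,\dots,y_T$ followed by the $u_{it}$'s. In each row indexed by $(i,t)$, the $y$-portion has $+1$'s precisely in the consecutive columns $i+1,\dots,t$, and the $u$-portion has a single $+1$ in the $u_{it}$ column. Hence the coefficient matrix has the block form $[\,B\mid I\,]$ with $B$ an interval (consecutive-ones) matrix and $I$ an identity; appealing to the two classical facts that interval matrices are TU and that appending identity columns preserves TU yields TU of the whole matrix. The unit-vector rows coming from the bounds in $(49)$ do not destroy TU.

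Since TU holds and all right-hand sides and bounds are integer, every vertex of the LP relaxation has integer $y$-coordinates. Eliminating $s_i^S,s_{ji}^S$ through the equalities $(45)$--$(47)$ turns the objective into an affine function of $(y,u)$; the $u_{it}$-coefficients are nonnegative because $d_t,\delta_t^S\ge 0$ and by Assumption~1 the coefficients $h_i^\prime$ and $h_{ji}^S$ are nonnegative, as noted just after $(43)$. Minimization therefore drives each $u_{it}$ down to $\max\{0,\,1-\sum_{k=i+1}^{t}y_k\}\in\{0,1\}$, giving $u_{it}=1$ precisely on $t\in[i+1,\psi(i)-1]$ and $u_{it}=0$ for $t\ge\psi(i)$, where $\psi(i)$ is the first setup strictly after $i$. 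Substituting this pattern into $(45)$--$(47)$ reproduces the closed-form expressions for $s_i^S$ and $s_{ji}^S$ given in Proposition~5, so the LP relaxation of $(44)$--$(49)$ attains exactly $opt(S)$, which is what "extended formulation" requires.

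The main obstacle I anticipate is the bookkeeping around TU: one has to check that no matter how the rows of $(48)$ and the trivial bound rows of $(49)$ are stacked, the resulting matrix still exhibits the consecutive-ones pattern on the $y$-columns, and that the sign assumptions on the eliminated-objective coefficients are tight enough for the projection argument, so that LP optimality really forces $u_{it}$ to the tight lower bound rather than leaving slack that would break the identification with Proposition~5.
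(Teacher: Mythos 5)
Your proof is correct and follows essentially the same route as the paper, which simply defers to the consecutive-ones/interval-matrix total-unimodularity argument of Proposition 2 in Pochet and Wolsey (1994) and then (in its Theorem 1) uses exactly your observation that the nonnegative objective coefficients drive each $u_{it}$ to $[1-\sum_{k=i+1}^{t}y_k]^{+}$, recovering the closed forms of Proposition 5. You have merely written out in full the details the paper leaves to the citation.
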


The proof of Proposition 6 is trivial, see the proof of Proposition 2 in \citet*{8Pochet1994} for details.

Define polyhedron $ P^S=\{(s^S,s_j^S,y)|(s^S,s_j^S,y) \ satisfies \ (37)-(40)\} $. We now consider the projection of the polyhedron $ Q^S=\{(s^S,s_j^S,y,u)|(s^S,s_j^S,y,u) \ satisfies \ (45)-(49)\} $.     
\begin{theorem}
	$ Proj_{(s^S, s_j^S, y)} Q^S=P^S $. Then polyhedron $ P^S $ is integral.
\end{theorem}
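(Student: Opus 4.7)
The plan is to establish the polyhedral identity by a pair of inclusions and then read off integrality of $P^S$ from the total unimodularity recorded in Proposition 6.

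For the forward inclusion $Proj_{(s^S,s_j^S,y)} Q^S \subseteq P^S$, I start with any $(s^S, s_j^S, y, u) \in Q^S$ and combine (48) and (49) to get $u_{it} \ge [1 - \sum_{k=i+1}^t y_k]^+$. Substituting this bound into the equalities (45) and (46), and using nonnegativity of $d_t$ and $\delta_t^S$, I truncate the sum at any $\nu$ (dropping tail terms only decreases the right-hand side) to recover (37), (38), and (39) one family at a time; (40) is inherited directly. So the projection lies in $P^S$.

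For the reverse inclusion, given $(s^S, s_j^S, y) \in P^S$ I manufacture $u$ with $(s^S, s_j^S, y, u) \in Q^S$. Set $a_{it} := 1 - \sum_{k=i+1}^t y_k$, which is nonincreasing in $t$. Because of this monotonicity and the nonnegativity of the coefficients, the maximum over $\nu$ of the right-hand sides in (37)--(38) is attained by keeping exactly the positive terms, so the sharpest lower bound on $s_i^S$ is $\Lambda_i := \sum_{t=i+1}^p d_t [a_{it}]^+ + \sum_{t=p+1}^T \delta_t^S [a_{it}]^+$; an analogous $\Lambda_{ji}$ lower-bounds $s_{ji}^S$. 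Take $u_{it} := [a_{it}]^+$ as a first pass; it already satisfies (48)--(49). The residuals $s_i^S - \Lambda_i \ge 0$ and $s_{ji}^S - \Lambda_{ji} \ge 0$ are then absorbed by incrementing a single coordinate $u_{it^*}$ with positive coefficient (if every coefficient vanishes, the lower bound forces the residual to be zero, so no adjustment is needed). Since we only raise $u$, (48) remains valid, and (45)--(46) now hold with equality.

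Integrality then follows cleanly from Proposition 6: the constraint matrix of (48)--(49) in the variables $(y, u)$ is totally unimodular, so the polytope cut out by (48)--(49) together with $0 \le y_i \le 1$ is integral; since (45)--(47) express $s^S$ and $s_j^S$ as affine functions of $(y, u)$, every vertex of $Q^S$ carries an integer $y$, and therefore so does every vertex of $P^S = Proj_{(s^S, s_j^S, y)} Q^S$. The one delicate step is the reverse inclusion, since the equalities (45)--(46) cannot be slackened; what saves us is that adjustments only raise $u$, so (48) is preserved by monotonicity, and the all-zero-coefficient degeneracy forces the residual to vanish outright, so no genuinely hard step is hiding.
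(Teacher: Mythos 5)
Your forward inclusion and the concluding integrality argument (total unimodularity of (48)--(49) via Proposition 6, plus the fact that vertices of a projection inherit the $y$-coordinates of vertices of $Q^S$) are sound and in the spirit of the paper. The genuine gap is in the reverse inclusion, precisely at the step ``the residuals are absorbed by incrementing a single coordinate $u_{it^*}$.'' For $i\in[1,p]$ this works, because the block $u_{i\cdot}$ enters only the single equality (45). But for $i\in[p+1,T]$ the same variables $u_{it}$ appear in the equality (46) for \emph{every} $j\in S$ simultaneously: (46) forces $s_{ji}^S-s_{j'i}^S=D_{j'i}-D_{ji}$ for all $j,j'\in S$, i.e.\ the slacks $s_{ji}^S-\Lambda_{ji}$ must coincide across $j$. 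The inequalities (39) defining $P^S$ impose no such coupling --- each $s_{ji}^S$ may exceed its own tight lower bound $\Lambda_{ji}$ by a different amount --- so a point of $P^S$ with, say, $s_{ji}^S=\Lambda_{ji}+1$ and $s_{j'i}^S=\Lambda_{j'i}$ for two scenarios $j,j'\in S$ admits no lift to $Q^S$, and no choice of $t^*$, nor of several coordinates, can reconcile $|S|$ incompatible equalities sharing one $u_{i\cdot}$. (Your parenthetical claim that vanishing coefficients ``force the residual to be zero'' is also not right: a lower bound of $0$ does not force $s_i^S=0$ in $P^S$.)

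The paper does not attempt to lift arbitrary points of $P^S$: it shows that at extreme points of $Q^S$ one has $u_{it}=[1-\sum_{k=i+1}^{t}y_k]^+$, matches these with the extreme points of $P^S$ --- where every $s$-variable sits exactly at the maximum of its lower bounds, so all slacks vanish and your construction does apply --- and then identifies the two polyhedra by asserting equality of recession cones. Your $u_{it}:=[a_{it}]^+$ is exactly the right lift for that extreme-point correspondence; what is missing in your write-up is the reduction of a general point to that case (and note that the coupling above puts the same pressure on the recession-cone identification itself, since the projected recession directions of $Q^S$ move all $s_{ji}^S$, $j\in S$, by a common amount). For what the theorem is actually used for --- minimizing an objective with nonnegative coefficients $h_{ji}^S$ over $P^S$, and integrality of its vertices --- the extreme-point correspondence suffices, so you should either argue via extreme points plus recession cones as the paper does, or weaken the claimed conclusion from the set equality $Proj_{(s^S,s_j^S,y)}Q^S=P^S$ to the statement about optima and vertices that your argument genuinely supports.
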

\begin{proof}{Proof}
	We project the polyhedron $ Q^S $ onto the $ (s^S,s_j^S,y) $ space. It is obvious that the extreme points of $ Q^S $ all satisfy $ u_{it}=[1-\sum\limits_{k=i+1}^t y_k]^+ $ and $ u_{it}\ge u_{i(t+1)} $, for $ i\in N $, thus $ s_i^S=\sum\limits_{t=i+1}^p d_t[1-\sum\limits_{k=i+1}^t y_k]^+ +\sum\limits_{t=p+1}^T \delta_t^S[1-\sum\limits_{k=i+1}^t y_k]^+ $	and $ s_{ji}^S=\sum\limits_{t=p+1}^i \delta_t^S+\sum\limits_{t=i+1}^T \delta_t^S[1-\sum\limits_{k=i+1}^t y_k]^+ -D_{ji} $, which are equivalent to $ s_i^S=\max\{\max\limits_{i+1\le \nu\le p}\{\sum\limits_{t=i+1}^\nu d_t(1-\sum\limits_{k=i+1}^t y_k)\}, \max\limits_{p+1\le \nu\le T}\{\sum\limits_{t=i+1}^p d_t(1-\sum\limits_{k=i+1}^t y_k) +\sum\limits_{t=p+1}^\nu \delta_t^S(1-\sum\limits_{k=i+1}^t y_k)\}\} $ and $ s_{ji}^S=\max\limits_{i+1\le \nu\le T}\{\sum\limits_{t=p+1}^i \delta_t^S+\sum\limits_{t=i+1}^\nu \delta_t^S(1-\sum\limits_{k=i+1}^t y_k) -D_{ji}\} $. Hence, the extreme points of $ Proj_{(s^S, s_j^S, y)} Q^S$ and $ P^S $ correspond. Denote the set of the extreme points as $ V $. In addition, it is trivial that $ Proj_{(s^S, s_j^S, y)} Q^S$ and $ P^S $ have the same recession cone, then apparently we have $ Proj_{(s^S, s_j^S, y)} Q^S=\{(s^S,s_{j}^S,y)\mid (\bar{s}^S,\bar{s}_j^S,y)\in conv(V),s^S\ge \bar{s}^S,s_j^S\ge\bar{s}_j^S \}=P^S $. By Proposition 6, $ P^S $ is integral.
\end{proof}

Theorem 1 means that optimizing over polyhedron $ P^S $ is enough to solve $ S $-subproblem. Therefore, we can solve the original (SLSCC) problem by solving at most $ card(\mathcal{S}) $ LP subproblems. Let $ \{\langle 1\rangle , \langle 2 \rangle, \ldots, \langle m\rangle\} $ be a permutation of set $ \Omega $ with $ p_{\langle 1\rangle}\le p_{\langle 2\rangle}\le \cdots \le p_{\langle m\rangle} $, and define parameter $ \kappa $ as the integer such that $ \sum\limits_{i=1}^\kappa p_{\langle i\rangle}\le \varepsilon $ and $ \sum\limits_{i=1}^{\kappa+1} p_{\langle i\rangle}> \varepsilon $. When $ m $ is fixed, the set of occurred scenarios sets $ \mathcal{S} $ has at most $ \sum\limits_{k=1}^\kappa \binom{k}{m} $ elements. Hence, the original (SLSCC) problem can be solved in polynomial time with fixed $ m $. However, for variable $ m $, it remains further study to sovle (SLSCC) more efficiently. Maybe an algorithm using the character of  LP $ S $-subproblems should be considered.

\begin{remark}
	We need not to involve all the possible occurred scenarios sets in $ \mathcal{S} $, in fact, if there are two elements $ S_1, S_2\in \mathcal{S} $ and $ S_1 $ is a proper subset of $ S_2 $, i.e. $ S_1\subsetneq S_2 $, then we can eliminate $ S_2 $ from $ \mathcal{S} $ without affection on the optimal solution. Therefore, there is no matter to assume that any two elements of $ \mathcal{S} $ do not have inclusion relation.
\end{remark}

\section{A Branch-and-bound Algorithm}
In this section, we develop a specialized branch-and-bound algorithm to solve the two-stage SLSCC problems exploiting the property of $ S $-subproblem and the formulation N-SLSCC. The algorithm is described in Algorithm 1. In general, the algorithm recursively branch the indicator variables $ z_j $ in the formulation N-SLSCC, and solve $ S $-subprobem to obtain a feasible solution and upper bound to reduce branches. We provide its detailed description next.

\begin{algorithm}
	\caption{A branch-and-bound algorithm for N-SLSCC}
	\begin{algorithmic}[1]
		\Require
		\State solve $ LR(\emptyset) $ and obtain its optimal value $ LB(\emptyset) $ and solution $ ({x^2}^0,{y^1}^0,{y^2}^0,{s^1}^0,{s^2_j}^0,z^0) $
		\If {$ ({y^1}^0,{y^2}^0,z^0)\in\{0,1\}^{T+m} $}
		\State STOP $ ({x^2}^0,{y^1}^0,{y^2}^0,{s^1}^0,{s^2_j}^0,z^0) $ is an optimal solution
		\Else
		\State set $ \mathcal{L}=\{\emptyset\} $, $ UB=+\infty $, and $ z^*=\emptyset $; give the tolerance $ \Delta $
		\EndIf 	
	\end{algorithmic}
	
	\begin{algorithmic}[1]
		\Ensure
		\While {$ \mathcal{L}\ne \emptyset $}
		\State 	select $ \mathcal{C}\in\mathcal{L} $ such that $ LB(\mathcal{C})=\min_{\mathcal{C^\prime}\in\mathcal{L}}\{LB(\mathcal{C^\prime})\} $
		\If {$ UB-LB(\mathcal{C})\le\Delta $}
		\State STOP the optimal solution of $ S(z^*) $-subproblem with $ z^* $ is an global optimal solution
		\EndIf
		\State solve $ S(z^{\mathcal{C}}) $-subproblem and obtain its optimal value and solution
		\If {$ UB(z^{\mathcal{C}})<UB $}
		\State $ UB \gets UB(z^{\mathcal{C}}) $ and $ z^*\gets {z^{\mathcal{C}}}_I $
		\EndIf
		\If {$ UB-LB(\mathcal{C})\le\Delta $}
		\State STOP the optimal solution of $ S(z^{\mathcal{C}}) $-subproblem with $ z^* $ is an global optimal solution
		\Else
		\State branch $ \mathcal{C} $ into $ \mathcal{C}_1=\mathcal{C}\cup\{\bar{j(z)}\} $ and $ \mathcal{C}_2=\mathcal{C}\cup\{\hat{j(z)}\} $, set $ \mathcal{L}\gets\mathcal{L}\setminus\{\mathcal{C}\} $
		\For {$ i=1,2 $} 
		\State solve $ LR(\mathcal{C}_i) $
		\If {$ LR(\mathcal{C}_i) $ is feasible}
		\State set $ \mathcal{L}\gets\mathcal{L}\cup\{\mathcal{C}_i\} $
		\EndIf
		\EndFor
		\EndIf
		\For {each $ \mathcal{C}\in\mathcal{L} $}
		\If {$ LB(\mathcal{C})>UB $}
		\State fathom $ \mathcal{C} $, set $ \mathcal{L}\gets \mathcal{L}\setminus\{\mathcal{C}\} $
		\EndIf
		\EndFor
		\EndWhile
	\end{algorithmic}
\end{algorithm}

In the following description, for any $ j\in \Omega $, let $ \bar{j} $ represent constraint $ z_j=0 $ and $ {\bf{\Omega}}_0=\{\bar{j}\mid j\in \Omega\} $ be the set of all such kind of constraint; let $ \hat{j} $ represent constraint $ z_j=1 $ and $ {\bf{\Omega}}_1=\{\hat{j}\mid j\in \Omega\} $ be the set of all such kind of constraint; $ \mathcal{C} $ denotes a subset of $ {\bf{\Omega}}_0\cup{\bf{\Omega}}_1 $, which does not include $ \bar{j} $ and $ \hat{j} $ at the same time for any $ j\in\Omega $; $ LR(\mathcal{C}) $ denotes the linear relaxation of $ \mathcal{C} $-subproblem, which is defined by $ (51)-(62) $ if there exsits any $ \bar{j}\in \mathcal{C} $ and by N-SLSCC with constraints in $ \mathcal{C} $ otherwise; $ LB(\mathcal{C}) $ denotes the optimal value of problem $ LR(\mathcal{C}) $, which is the lower bound on the optimal value of N-SLSCC over $ \mathcal{C} $; $ UB $ denotes a global upper bound on the optimal value; $ \mathcal{L} $ is a list of un-fathomed subsets of $ {\bf{\Omega}}_0\cup{\bf{\Omega}}_1 $ defined formerly; $ z^* $ denotes the best candidate indicator vector; $ z^{\mathcal{C}} $ denotes the optimal indicator vector of problem $ LR(\mathcal{C}) $; for $ z\in[0,1]^m $, let $ \{z_{\langle 1\rangle}, z_{\langle 2\rangle} ,\ldots, z_{\langle m\rangle}\} $ be a permutation of set $ \{z_1, z_2, \ldots, z_m\} $, which satisfies $ z_{\langle 1\rangle}\le z_{\langle 2\rangle} \le \ldots\le z_{\langle m\rangle} $, define $ S(z)=\{\langle 1\rangle, \langle 2\rangle,\ldots,\langle \kappa\rangle\mid \sum\limits_{i=1}^{\kappa-1} p_{\langle i\rangle}< 1-\varepsilon\ and \ \sum\limits_{i=1}^\kappa p_{\langle i\rangle}\ge 1-\varepsilon,\ \kappa\in\Omega\} $ as an index subset of $ \Omega $, then obviously $ S(z)\in \mathcal{S} $, and $ UB(z) $ denotes the optimal value of $ S(z) $-subproblem; for $ z\in[0,1]^m $, define $ z_I\in\{0,1\}^m $ which satisfies $ {z_I}_j=0 $ when $ j\in S(z) $ and $ {z_I}_j=1 $ otherwise, and $ j(z)=\argmin\limits_{\langle j\rangle\in\Omega}\{z_{\langle j\rangle}\mid 0<z_{\langle j\rangle}<1,\langle j\rangle\in\Omega\} $; let $ \Delta $ be the tolerance of optimal value, which can avoid excessive computational cost to reduce a small difference to achieve the optimal value. 

For each set $ \mathcal{C} $, let $ J^\mathcal{C}_1=\{j\mid \bar{j}\in\mathcal{C}\} $ and $ J^\mathcal{C}_2=\{j\mid \hat{j}\in\mathcal{C}\} $, respectively. Define index set $ \Omega(\mathcal{C})=\Omega\setminus({J^\mathcal{C}_1}\cup{J^\mathcal{C}_2}) $. Similar to the definition of $ S $-subproblem, we can define a kind of $ \mathcal{C} $-subproblem when $ J^\mathcal{C}_1\ne \emptyset $ and utilizing the relation $ x_i=\frac{1}{\sum\limits_{j\in J^\mathcal{C}_1}p_j}[\sum\limits_{j \in J^\mathcal{C}_1 }p_j(d_{ji}+s_{ji}-s_{j(i-1)})],\ i\in[p+1,T] $ as follows:
\begin{align}
&\min \sum\limits_{i=1}^{T}\beta_i y_i+\sum\limits_{i = 1}^p h_i^\prime s_i + \sum\limits_{i = p + 1}^T \sum\limits_{j=1}^m h_{ji}^{J^\mathcal{C}_1} s_{ji} + \sum\limits_{i = p + 1}^T r_i^{J^\mathcal{C}_1}\\
&s_i\ge \sum\limits_{t=i+1}^\nu d_t(1-\sum\limits_{k=i+1}^t y_k), \quad i\in [1,p], \quad \nu\in [i+1,p]\\
&s_i\ge \sum\limits_{t=i+1}^p d_t(1-\sum\limits_{k=i+1}^t y_k) +\sum\limits_{t=p+1}^\nu \delta_t^{J^\mathcal{C}_1}(1-\sum\limits_{k=i+1}^t y_k), \quad i\in [1,p],\quad \nu\in [p+1,T]\\
&s_i\ge \sum\limits_{t=i+1}^{p}d_t(1-\sum\limits_{k=i+1}^{t}y_k)+\sum\limits_{t=p+1}^{\nu}d_{\tau t}(1-\sum\limits_{k=i+1}^{t}y_k-z_\tau),\quad 
i\in[1,p],\quad \tau\in\Omega(\mathcal{C}),\quad \nu\in[p+1,T]\\
&s_{ji}\ge\sum\limits_{t=p+1}^i \delta_t^{J^\mathcal{C}_1}+\sum\limits_{t=i+1}^\nu \delta_t^{J^\mathcal{C}_1}(1-\sum\limits_{k=i+1}^t y_k)-D_{ji}, \quad i\in [p+1,T],\quad \nu\in[i+1,T], \quad j\in {J^\mathcal{C}_1}\\
&s_{ji}\ge \sum\limits_{t=p+1}^{i}d_{\tau t}(1-z_\tau)+\sum\limits_{t=i+1}^{\nu}d_{\tau t}(1-\sum\limits_{k=i}^{t}y_k-z_\tau)-D_{ji}, \nonumber \\
&\qquad\qquad\qquad\qquad\qquad\qquad\qquad\qquad i\in[p+1,T],\quad \nu\in[i+1,T], \quad \tau\in\Omega(\mathcal{C}),\quad j\in{J^\mathcal{C}_1} \\
&s_{ji}\ge\sum\limits_{t=p+1}^i \delta_t^{J^\mathcal{C}_1}(1-z_j)+\sum\limits_{t=i+1}^\nu \delta_t^{J^\mathcal{C}_1}(1-\sum\limits_{k=i+1}^t y_k-z_j)-D_{ji}(1-z_j), \nonumber \\
&\qquad\qquad\qquad\qquad\qquad\qquad\qquad\qquad i\in [p+1,T],\quad \nu\in[i+1,T], \quad j\in \Omega(\mathcal{C})\\
&s_{ji}\ge \sum\limits_{t=p+1}^{i}d_{\tau t}(1-z_\tau-z_j)+\sum\limits_{t=i+1}^{\nu}d_{\tau t}(1-\sum\limits_{k=i}^{t}y_k-z_\tau-z_j)-D_{ji}(1-z_j), \nonumber \\
&\qquad\qquad\qquad\qquad\qquad\qquad\qquad\qquad i\in[p+1,T],\quad \nu\in[i+1,T], \quad \tau\in\Omega(\mathcal{C}),\quad j\in\Omega(\mathcal{C}) \\
&s_{ji}=0,\quad i\in [p+1,T],\quad j\in{J^\mathcal{C}_2}\\
&\sum\limits_{j\in\Omega(\mathcal{C})}p_j z_j\le \varepsilon-\sum\limits_{j\in{J^\mathcal{C}_2}}p_j\\
&z_j=0, \quad j\in {J^\mathcal{C}_1}, \quad z_j=1, \quad j\in {J^\mathcal{C}_2}\\
&s_i \in\mathbb{R}_+,\ s_{ji}\in \mathbb{R}_+, \ y_i\in\{0, 1\},z_j\in\{0,1\}
\end{align}
where $ h_i^\prime $, $ h_{ji}^{J^\mathcal{C}_1} $, $ r_i^{J^\mathcal{C}_1} $ and $ \delta_t^{J^\mathcal{C}_1} $ are just the same as those defined in the $ S $-subproblem when $ S=J^\mathcal{C}_1 $ omitting the requirement $ S\in\mathcal{S} $. Obviously, $ \mathcal{C} $-subproblem has the same optimal solutions as N-SLSCC with constraints in $ \mathcal{C} $. Now we can describe the branch-and-bound scheme below.

The algorithm starts by solving the linear relaxation of N-SLSCC $ LR(\emptyset) $, which gives the lower bound $ LB(\emptyset) $, then we check if the optimal solution is a feasible solution of N-SLSCC, if so, it is also the optimal solution of N-SLSCC and the algorithm ends; if not, then the algorithm starts its main loop.

The completion of branching is by adding $ \bar{j} $ or $ \hat{j} $ type of constraint into an element of the list $ \mathcal{L} $. First, we select a branching element $ \mathcal{C} $ from $ \mathcal{L} $, which is based upon the least-lower bound rule to guarantee the bounding process is bound improving (line 2 of the main loop in Algorithm 1). After the selection, suppose $ z $ is the optimal indicator vector when $ LR(\mathcal{C}) $ attains its optimal value, then solve $ S(z) $-subproblem to obtain a feasible solution and an upper bound $ UB(z) $ of original problem. Let $ UB=UB(z) $ if $ UB>UB(z) $, and $ UB=UB $ otherwise. If $ LB(\mathcal{C})<UB $, then branch $ \mathcal{C} $ into two new set $ \mathcal{C}_1 $ and $ \mathcal{C}_2 $ and add $ \mathcal{C}_i $ into list $ \mathcal{L} $ when problem $ LR(\mathcal{C}_i) $ is  feasible for $ i=1,2 $, as the same time eliminate $ \mathcal{C} $ from $ \mathcal{L} $, where $ \mathcal{C}_1=\mathcal{C}\cup\{\bar{j(z)}\} $ and $ \mathcal{C}_2=\mathcal{C}\cup\{\hat{j(z)}\} $.

The reduction of branches is very natural, after branching, for every element $ \mathcal{C}\in \mathcal{L} $, examine if $ LB(\mathcal{C})>UB $, if so, eliminate it from list $ \mathcal{L} $, otherwise preserve it. 

The algorithm stops when the difference between the global upper bound and lower bound is no more than the given tolerance, then $ z^* $ is the optimal indicator vector and $ S(z^*) $-subproblem gives the optimal solution and optimal value of N-SLSCC (The optimal production levels $ x^2 $ can be computed by inventory levels $ s^1 $ and $ s_j^2 $).

We can prove that the branch-and-bound algorithm for N-SLSCC converges in finitely many steps.

\begin{theorem}
	For the two-stage SLSCC problem, under Assumption 1, its N-SLSCC formulation can be solved by Algorithm 1 in finitely many steps even when the tolerence $ \Delta=0 $.
\end{theorem}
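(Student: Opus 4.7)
The plan is to combine two structural facts: each branching step strictly enlarges $\mathcal{C}$, so the enumeration tree has depth bounded by $m$; and Theorem~1 guarantees that once $\mathcal{C}$ fixes every $z_j$, the relaxation $LR(\mathcal{C})$ has an integer optimum, so that the stopping test $UB - LB(\mathcal{C}) \le \Delta$ must eventually trigger even when $\Delta = 0$.

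First, I would argue that the enumeration tree is finite. The branching index $j(z)$ is chosen from the strictly fractional entries of the LP optimum of $LR(\mathcal{C})$, so $j(z) \in \Omega(\mathcal{C}) = \Omega \setminus (J^\mathcal{C}_1 \cup J^\mathcal{C}_2)$, because any $z_j$ with $j \in J^\mathcal{C}_1 \cup J^\mathcal{C}_2$ is already fixed to an integer by the constraints of the $\mathcal{C}$-subproblem. Hence both children $\mathcal{C}_1 = \mathcal{C} \cup \{\bar{j(z)}\}$ and $\mathcal{C}_2 = \mathcal{C} \cup \{\hat{j(z)}\}$ satisfy $|\mathcal{C}_i| = |\mathcal{C}| + 1$, and since $|\mathcal{C}| \le m$ throughout, the tree has depth at most $m$. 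Because each admissible subset of ${\bf{\Omega}}_0 \cup {\bf{\Omega}}_1$ enters $\mathcal{L}$ at most once (only when its unique parent is branched), the total number of iterations is bounded above by $2^{m+1}-1$.

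Second, I would show that whenever the selected $\mathcal{C}$ satisfies $J^\mathcal{C}_1 \cup J^\mathcal{C}_2 = \Omega$, the stopping condition must trigger. If $J^\mathcal{C}_1 \notin \mathcal{S}$ then the chance constraint is violated and $LR(\mathcal{C})$ is infeasible, so such a node is never admitted to $\mathcal{L}$. Otherwise, $LR(\mathcal{C})$ reduces exactly to the linear relaxation of the $S$-subproblem with $S = J^\mathcal{C}_1$, and Theorem~1 supplies an integer optimum. Thus $z^\mathcal{C}$ is integer with $S(z^\mathcal{C}) = J^\mathcal{C}_1$, and the $S(z^\mathcal{C})$-subproblem solved at line~6 returns the same objective value as $LR(\mathcal{C})$, i.e., $UB(z^\mathcal{C}) = LB(\mathcal{C})$. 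The least-lower-bound selection rule preserves the invariant $LB(\mathcal{C}) \le$ (optimal value of N-SLSCC) $\le UB$, so after updating $UB \leftarrow UB(z^\mathcal{C})$ we obtain $UB = LB(\mathcal{C})$ and the test at line~11 triggers.

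Combining these, the algorithm halts in finitely many iterations: either the stopping test triggers at some $\mathcal{C}$, or $\mathcal{L}$ empties through fathoming. In both cases $z^*$ attains the global optimum of N-SLSCC by the standard branch-and-bound correctness argument, because every integer-feasible assignment of $z$ corresponds to a leaf of the enumeration tree that is either solved exactly by Theorem~1 or fathomed with $LB > UB$. The main obstacle, in my view, is the careful bookkeeping required to sustain the lower-bound invariant across branching and fathoming steps; this rests on the children's feasible regions jointly covering the parent's $z$-integer feasible region and on $LR(\mathcal{C})$ being a genuine relaxation of the residual $\mathcal{C}$-subproblem, both of which are ensured by the construction of $\Omega(\mathcal{C})$ together with the constraints of the $\mathcal{C}$-subproblem.
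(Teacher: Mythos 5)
Your proposal is correct and follows essentially the same route as the paper's proof: the tree has depth at most $m$ because each branching adds one element to $\mathcal{C}$, and at a fully-fixed node the relaxation coincides with the LP relaxation of an $S$-subproblem, which is exact by Theorem~1, so the gap closes and the algorithm stops after finitely many (at most exponentially many in $m$) iterations. Your version is somewhat more careful about the bookkeeping (why $j(z)$ is always an unfixed index, why infeasible leaves are pruned, and why $UB(z^{\mathcal{C}})=LB(\mathcal{C})$ forces the stopping test), but the underlying argument is the same.
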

\begin{proof}{Proof}
	When the algorithm terminates before starting the main loop, obviously the conclusion holds. Then we consider the algorithm terminates during the main loop. For every $ \mathcal{C}\in\mathcal{L} $, it can contain at most $ m $ elements, and for each iteration, it can easily be seen that the chosen $ \mathcal{C}\in\mathcal{L} $ will be branched into two sets and eliminated from $ \mathcal{L} $, and both of them contain one more element than $ \mathcal{C} $, if the algorithm does not stop. Thus, after at most $ 2^m $ iterations, every $ \mathcal{C}\in\mathcal{L} $ will contain $ m $ elements and can not be branched anymore. By the definition of $ \mathcal{C} $-subproblem, it is equivalent to $ J^\mathcal{C}_1 $-subproblem with $ J^\mathcal{C}_1\in\mathcal{S} $ when $ card(\mathcal{C})=m $. Therefore, after at most $ 2^m $ iterations, for the set $ \mathcal{C}\in\mathcal{L} $ such that $ LB(\mathcal{C})=\min_{\mathcal{C^\prime}\in\mathcal{L}}\{LB(\mathcal{C^\prime})\} $, solving $ LR(\mathcal{C}) $ will just obtain the global optimal solution. Hence, the conclusion holds.
\end{proof}

\begin{APPENDIX}{}
	\textbf{Proof of Proposition 1:} Firstly, we prove that there is an optimal $ x_i $ satisfies (13), then using (13) to prove equations (14) (15). For the simplicity of expression, we define that $ y_{T+1}=1 $ and $ \alpha_{T+1}=\beta_{T+1}=h_{T+1}=0 $.
	
	If an optimal solution' $ x_i $ does not have the above form, then there must be some $ x_k, k\in N $ violates (13) . We can analyze that in three conditons.
	
	1. Assume that $ x_k>0 $ and $ x_k\ne\sum\limits_{t=k}^{\psi (k)-1}d_t, \psi(k)\le p $, and for any $ x_q>0, q<k $, there is $ x_q=\sum\limits_{t=q}^{\psi(q)-1}d_t $.
	Let functions $ g_1 $ and $ f $ respectively be
	\begin{equation*}
	g_1(x,y,s,s_j,z)=\sum\limits_{i\in[1,p]\setminus[k,\psi(k)]} (\alpha_i x_i+\beta_i y_i+h_i s_i)+\sum\limits_{i = p + 1}^T(\alpha_i x_i+\beta_i y_i)+\sum\limits_{j\in{J_z}}(p_j \sum\limits_{i=p+1}^T h_i s_{ji}) 				
	\end{equation*}
	\begin{equation*}
	\begin{aligned}
	f(x,y,s,s_j,z)&=g_1(x,y,s,s_j,z)+(\alpha_k x_k+\beta_k y_k+h_k s_k)+\sum\limits_{i=k+1}^{\psi(k)-1}h_i s_i\\
	&+(\alpha_{\psi(k)} x_{\psi(k)}+\beta_{\psi(k)} y_{\psi(k)}+h_{\psi(k)} s_{\psi(k)}) 
	\end{aligned}
	\end{equation*}
	
	case 1.1: If $  x_k<\sum\limits_{t=k}^{\psi (k)-1}d_t $, then the demand of period $ \psi(k)-1 $ is not satisfied, contradiction.
	
	case 1.2: If $ x_k>\sum\limits_{t=k}^{\psi (k)-1}d_t $, then let $ (\bar{x}, \bar{y}, \bar{s}, \bar{s}_j, \bar{z}) $ be \\
	$ \bar{x}_k=x_k-\epsilon $, $ \bar{s}_i=s_i-\epsilon $ for any $ i\in[k,\psi(k)-1] $, and $ \bar{x}_{\psi(k)}=x_{\psi(k)}+\epsilon $, and other components are the same as $ (x,y,s,s_j,z) $.
	
	Then
	\begin{equation*}
	\begin{aligned}
	f(\bar{x},\bar{y},\bar{s},\bar{s}_j,z)&=g_1(x,y,s,s_j,z)+(\alpha_k (x_k-\epsilon)+\beta_k y_k+h_k (s_k-\epsilon))+\sum\limits_{i=k+1}^{\psi(k)-1}h_i (s_i-\epsilon)\\
	&+(\alpha_{\psi(k)} (x_{\psi(k)}+\epsilon)+\beta_{\psi(k)} y_{\psi(k)}+h_{\psi(k)} s_{\psi(k)})\\
	&=f(x,y,s,s_j,z)-(\alpha_k+\sum\limits_{i=k}^{\psi(k)-1} h_i-\alpha_{\psi(k)})\epsilon\\
	&=f(x,y,s,s_j,z)-\sum\limits_{i=k}^{\psi(k)-1}(\alpha_i+h_i-\alpha_{i+1})\epsilon
	\end{aligned}
	\end{equation*}
	by Assumption 1, we have $ \sum\limits_{i=k}^{\psi(k)-1}(\alpha_i+h_i-\alpha_{i+1})\ge 0 $, if $ \sum\limits_{i=k}^{\psi(k)-1}(\alpha_i+h_i-\alpha_{i+1})> 0 $, then $ f(\bar{x},\bar{y},\bar{s},\bar{s}_j,\bar{z})< f(x,y,s,s_j,z) $, for any $ 0<\epsilon\le x_k-\sum\limits_{t=k}^{\psi(k)-1}d_t $, which contradicts to the optimal property of $ (x,y,s,s_j,z) $. Therefore, $ \sum\limits_{i=k}^{\psi(k)-1}(\alpha_i+h_i-\alpha_{i+1})= 0 $, then $ \epsilon $ can be increased such that $ \bar{x}_k=\sum\limits_{t=k}^{\psi(k)-1}d_t $ without affecting the optimal property.
	
	2. Let $ l=argmax\{i: y_i=1, i\le p\} $, if $ x_l\ne \sum\limits_{t=l}^p d_t+\max\limits_{j\in J_z}\{\sum\limits_{t=p+1}^{\psi(l)-1}d_{jt}\} $, then there are two cases.
	
	case 2.1 similar to case 1.1.
	
	case 2.2 If $ x_l> \sum\limits_{t=l}^p d_t+\max\limits_{j\in J_z}\{\sum\limits_{t=p+1}^{\psi(l)-1}d_{jt}\} $, then let functions $ g_2 $ and $ f $ respectively be 
	\begin{equation*}
	g_2(x,y,s,s_j,z)=\sum\limits_{i=1}^{l-1} (\alpha_i x_i+\beta_i y_i+h_i s_i)+\sum\limits_{i = \psi(l)+1}^T(\alpha_i x_i+\beta_i y_i)+\sum\limits_{j\in{J_z}}(p_j \sum\limits_{i=\psi(l)+1}^T h_i s_{ji}) 				
	\end{equation*}
	\begin{equation*}
	\begin{aligned}
	f(x,y,s,s_j,z)&=g_2(x,y,s,s_j,z)+(\alpha_l x_l+\beta_l y_l+h_l s_l)+\sum\limits_{i=l+1}^{p}h_i s_i+\sum\limits_{j\in{J_z}}(p_j\sum\limits_{i = p + 1}^{\psi(l)-1}h_is_{ji})\\
	&+(\alpha_{\psi(l)} x_{\psi(l)}+\beta_{\psi(l)} y_{\psi(l)}+\sum\limits_{j\in J_z}p_jh_{\psi(i)}s_{j\psi(i)}) 
	\end{aligned}
	\end{equation*}
	and let $ (\bar{x}, \bar{y}, \bar{s}, \bar{s}_j, \bar{z}) $ be \\
	$ \bar{x}_l=x_l-\epsilon $, $ \bar{s}_i=s_i-\epsilon $ for any $ i\in[l,p] $, $ \bar{s}_{ji}=s_{ji}-\epsilon $ for any $ i\in [p+1, \psi(l)-1], j\in J_z $ and $ \bar{x}_{\psi(l)}=x_{\psi(l)}+\epsilon $, and other components are the same as $ (x,y,s,s_j,z) $.
	
	Then we have 
	\begin{equation*}
	\begin{aligned}
	f(\bar{x},\bar{y},\bar{s},\bar{s}_j,z)&=g_2(x,y,s,s_j,z)+(\alpha_l (x_l-\epsilon)+\beta_l y_l+h_l (s_l-\epsilon))+\sum\limits_{i=l+1}^{p}h_i (s_i-\epsilon)\\
	&+\sum\limits_{j\in J_z}[p_j\sum\limits_{i = p + 1}^{\psi(l)-1}h_i(s_{ji}-\epsilon)]+(\alpha_{\psi(l)} (x_{\psi(l)}+\epsilon)+\beta_{\psi(l)} y_{\psi(l)}+\sum\limits_{j\in J_z}p_j h_{\psi(l)} s_{j\psi(l)})\\
	&=f(x,y,s,s_j,z)-(\alpha_l+\sum\limits_{i=l}^{p} h_i+\sum\limits_{j\in J_z}p_j\sum\limits_{i = p + 1}^{\psi(l)-1}h_i-\alpha_{\psi(l)})\epsilon\\
	&=f(x,y,s,s_j,z)-[\sum\limits_{i=l}^{p}(\alpha_i+h_i-\alpha_{i+1})+\sum\limits_{i = p + 1}^{\psi(l)-1}(\alpha_i+\sum\limits_{j\in J_z}p_j h_i-\alpha_{i+1})]\epsilon
	\end{aligned}
	\end{equation*}
	by Assumption 1, we have $ \sum\limits_{i=l}^{p}(\alpha_i+h_i-\alpha_{i+1})+\sum\limits_{i = p + 1}^{\psi(l)-1}(\alpha_i+\sum\limits_{j\in J_z}p_j h_i-\alpha_{i+1})\ge \sum\limits_{i=l}^{p}(\alpha_i+h_i-\alpha_{i+1})+\sum\limits_{i = p + 1}^{\psi(l)-1}(\alpha_i+(1-\epsilon) h_i-\alpha_{i+1})\ge 0 $, if $ \sum\limits_{i=l}^{p}(\alpha_i+h_i-\alpha_{i+1})+\sum\limits_{i = p + 1}^{\psi(l)-1}(\alpha_i+\sum\limits_{j\in J_z}p_j h_i-\alpha_{i+1})>0 $, then $ f(\bar{x},\bar{y},\bar{s},\bar{s}_j,\bar{z})< f(x,y,s,s_j,z) $, for any $ 0<\epsilon\le x_l-(\sum\limits_{t=l}^p d_t+\max\limits_{j\in J_z}\{\sum\limits_{t=p+1}^{\psi(l)-1}d_{jt}\}) $, which contradicts to the optimal property of $ (x,y,s,s_j,z) $. Therefore, $ \sum\limits_{i=l}^{p}(\alpha_i+h_i-\alpha_{i+1})+\sum\limits_{i = p + 1}^{\psi(l)-1}(\alpha_i+\sum\limits_{j\in J_z}p_j h_i-\alpha_{i+1})=0 $, then $ \epsilon $ can be increased such that $ \bar{x}_l=\sum\limits_{t=l}^p d_t+\max\limits_{j\in J_z}\{\sum\limits_{t=p+1}^{\psi(l)-1}d_{jt}\} $ without affecting the optimal property.
	
	3. If there is some $ k\ge p+1 $ satisfies that $ x_k\ge 0 $ , $ x_k\ne \max\limits_{j\in J_z}\{\sum\limits_{t=k}^{\psi(k)-1}d_{jt}-s_{j(k-1)}\} $, and for any $ x_q>0 $, $ p+1\le q<k $ there is $ x_q=\max\limits_{j\in J_z}\{\sum\limits_{t=q}^{\psi(q)-1}d_{jt}-s_{j(q-1)}\} $. 
	
	case 3.1. If $ x_k<\max\limits_{j\in J_z}\{\sum\limits_{t=k}^{\psi(k)-1}d_{jt}-s_{j(k-1)}\} $, then there exist at least one $ j\in J_z $ such that demand $ d_{j(\psi(k)-1)} $ is not satisfied.
	
	case 3.2. If $ x_k>\max\limits_{j\in J_z}\{\sum\limits_{t=k}^{\psi(k)-1}d_{jt}-s_{j(k-1)}\} $, let functions $ g_3 $ and $ f $ respectively be 
	\begin{equation*}
	g_3(x,y,s,s_j,z)=\sum\limits_{i=1}^{p} (\alpha_i x_i+\beta_i y_i+h_i s_i)+\sum\limits_{k\in[p+1,T]\setminus[k,\psi(k)]}(\alpha_i x_i+\beta_i y_i)+\sum\limits_{j\in{J_z}}(p_j \sum\limits_{i\in [p+1,T]\setminus[k,\psi(k)]} h_i s_{ji}) 				
	\end{equation*}
	\begin{equation*}
	\begin{aligned}
	f(x,y,s,s_j,z)&=g_3(x,y,s,s_j,z)+(\alpha_k x_k+\beta_k y_k)+\sum\limits_{j\in{J_z}}(p_j\sum\limits_{i = k}^{\psi(k)-1}h_is_{ji})\\
	&+(\alpha_{\psi(k)} x_{\psi(k)}+\beta_{\psi(k)} y_{\psi(k)}+\sum\limits_{j\in J_z}p_jh_{\psi(k)}s_{j\psi(k)}) 
	\end{aligned}
	\end{equation*}
	and let $ (\bar{x}, \bar{y}, \bar{s}, \bar{s}_j, \bar{z}) $ be \\
	$ \bar{x}_k=x_k-\epsilon $, $ \bar{s}_{ji}=s_{ji}-\epsilon $ for any $ i\in [k, \psi(k)-1], j\in J_z $ and $ \bar{x}_{\psi(k)}=x_{\psi(k)}+\epsilon $, and other components are the same as $ (x,y,s,s_j,z) $.
	
	Then we have 
	\begin{equation*}
	\begin{aligned}
	f(\bar{x},\bar{y},\bar{s},\bar{s}_j,z)&=g_3(x,y,s,s_j,z)+(\alpha_k (x_k-\epsilon)+\beta_k y_k)+\sum\limits_{j\in J_z}[p_j\sum\limits_{i = k}^{\psi(k)-1}h_i(s_{ji}-\epsilon)]\\
	&+(\alpha_{\psi(k)} (x_{\psi(k)}+\epsilon)+\beta_{\psi(k)} y_{\psi(k)}+\sum\limits_{j\in J_z}p_j h_{\psi(k)} s_{j\psi(k)})\\
	&=f(x,y,s,s_j,z)-(\alpha_k+\sum\limits_{j\in J_z}p_j\sum\limits_{i = k}^{\psi(k)-1}h_i-\alpha_{\psi(k)})\epsilon\\
	&=f(x,y,s,s_j,z)-[\sum\limits_{i = k}^{\psi(k)-1}(\alpha_i+\sum\limits_{j\in J_z}p_j h_i-\alpha_{i+1})]\epsilon
	\end{aligned}
	\end{equation*}
	by Assumption 1, we have $ \sum\limits_{i = k}^{\psi(k)-1}(\alpha_i+\sum\limits_{j\in J_z}p_j h_i-\alpha_{i+1})\ge \sum\limits_{i = k}^{\psi(k)-1}(\alpha_i+(1-\epsilon) h_i-\alpha_{i+1})\ge 0 $, if $ \sum\limits_{i = k}^{\psi(k)-1}(\alpha_i+\sum\limits_{j\in J_z}p_j h_i-\alpha_{i+1})>0 $, then $ f(\bar{x},\bar{y},\bar{s},\bar{s}_j,\bar{z})< f(x,y,s,s_j,z) $, for any $ 0<\epsilon\le x_k-\max\limits_{j\in J_z}\{\sum\limits_{t=k}^{\psi(k)-1}d_{jt}-s_{j(k-1)}\} $, which contradicts to the optimal property of $ (x,y,s,s_j,z) $. Therefore, $ \sum\limits_{i = k}^{\psi(k)-1}(\alpha_i+\sum\limits_{j\in J_z}p_j h_i-\alpha_{i+1})=0 $, then $ \epsilon $ can be increased such that $ \bar{x}_k=\max\limits_{j\in J_z}\{\sum\limits_{t=k}^{\psi(k)-1}d_{jt}-s_{j(k-1)}\} $ without affecting the optimal property.
	
	Now we have proved that there is one optimal $ x $ has the expression of equation (13), then we prove the rest equations.
	
	By constraints $ x_i+s_{i-1}=d_i+s_i, i=[1,p-1] $, it is easy to clarify that equation (14) holds.
	
	Apparently, for any $ j\in \Omega\setminus J_z $, $ z_j=1 $ and the optimal $ s_{ji} $, $ i\in [p+1,T] $, then we consider the optimal $ s_j $ when $ z_j=0 $.
	
	By (14), there is $ s_p=\max\limits_{j\in{J_z}}\{\sum\limits_{t=p+1}^{\psi(l)-1}d_{jt}\} $.
	
	Under optimal condition, the constraints about $ s_{ji}, j\in J_z $ should all reach equiality. When $ i\in [p+1, \psi(l)-1] $, $ y_i=0 $, so $ x_i=0 $, then 
	for each $ i\in[p+1,\psi(l)-1] $, $ s_{ji}=\sum\limits_{t=p+1}^{i}(x_t-d_{jt})+s_p=\max\limits_{j\in J_z}\{\sum\limits_{p+1}^{\psi(l)-1}d_{jt}\}-\sum\limits_{t=p+1}^i d_{jt} $, and $ s_{j\psi(l)}=\sum\limits_{t=p+1}^{\psi(l)}(x_t-d_{jt})+s_p=x_{\psi(l)}+\max\limits_{j\in{J_z}}\{\sum\limits_{t=p+1}^{\psi(l)-1}d_{jt}\}-\sum\limits_{p+1}^{\psi(l)}d_{jt}=x_{\psi(l)}+s_{j(\psi(l)-1)}-d_{j\psi(l)} $.
	
	Assume that $ \psi(l)=i_1<i_2<\ldots<i_v\le T<i_{v+1}=T $, $ \mathcal{I}=\{i_1,\cdots,i_v\} $, and $ y_i=1 $ when $ i\in \mathcal{I} $, $ y_i=0 $ when $ i\in[p+1,T]\setminus \mathcal{I} $. We complete the proof of equation (3) by induction.
	
	When $ k=1 $, clearly, $ s_{ji_1}=x_{i_1}+s_{j(i_1-1)}-d_{ji_1} $, and for each $ i\in [i_1+1,i_2-1] $, there is $ s_{ji}=\sum\limits_{t=p+1}^{i}(x_t-d_{jt})+s_p=x_{i_1}+\max\limits_{j\in{J_z}}\{\sum\limits_{t=p+1}^{i_1-1}d_{jt}\}-\sum\limits_{t=p+1}^i d_{jt}=x_{\phi(i)}+s_{j(\phi(i)-1)}-\sum\limits_{t=\phi(i)}^i d_{jt} $.
	
	Assume that for each $ k_0\le k $, 
	\begin{equation}
	s_{ji_{k_0}}=x_{i_{k_0}}+s_{j(i_{k_0}-1)}-d_{ji_{k_0}}
	\end{equation}
	and 
	\begin{equation}
	s_{ji}=\sum\limits_{t=p+1}^{i}(x_t-d_{jt})+s_p=x_{\phi(i)}+s_{j(\phi(i)-1)}-\sum\limits_{t=\phi(i)}^i d_{jt}, \quad i\in[i_{k_0}+1,i_{k_0+1}-1] 
	\end{equation}
	hold.
	
	Then 
	\begin{equation*}
	\begin{aligned}
	s_{ji_{k+1}}&=\sum\limits_{t=p+1}^{i_{k+1}}(x_t-d_{jt})+s_p\\
	&=(x_{i_{k+1}}-d_{ji_{k+1}})+\sum\limits_{t=p+1}^{i_{k+1}-1}(x_t-d_{jt})+s_p \\
	&=x_{i_{k+1}}-d_{ji_{k+1}}+s_{j(i_{k+1}-1)}\qquad(by\ \ Eq.(64))
	\end{aligned}
	\end{equation*}
	and for each $ i\in[i_{k+1}+1,i_{k+2}-1] $
	\begin{equation*}
	\begin{aligned}
	s_{ji}&=\sum\limits_{t=p+1}^{i}(x_t-d_{jt})+s_p\\
	&=\sum\limits_{t=p+1}^{i_{k+1}-1}(x_t-d_{jt})+s_p+\sum\limits_{t=i_{k+1}}^{i}(x_t-d_{jt})\\
	&=s_{j(i_{k+1}-1)}+x_{i_{k+1}}-\sum\limits_{t=i_{k+1}}^{i}d_{jt}\\
	&=x_{\phi(i)}+s_{j(\phi(i)-1)}-\sum\limits_{t=\phi(i)}^{i}d_{jt}
	\end{aligned}
	\end{equation*}
	
	Hence, (15) holds.

	\textbf{Proof of Proposition 2:} Let $  l=argmax\{i: y_i=1, i\le p\} $, then $ \psi(l)\ge p+1 $. 
	
	Assume that $ \psi(l)=i_1<i_2<\ldots<i_v\le T<i_{v+1}=T $, $ \mathcal{I}=\{i_1,\cdots,i_v\} $, and $ y_i=1 $ when $ i\in \mathcal{I} $, $ y_i=0 $ when $ i\in[p+1,T]\setminus \mathcal{I} $. 
	
	If for any $ i_k \in \mathcal{I} $, 
	\begin{equation}
	s_{j(i_k-1)}=\max\limits_{j\in{J_z}}\{\sum\limits_{t=p+1}^{i_k-1}d_{jt}\}-\sum\limits_{t=p+1}^{i_k-1}d_{jt}
	\end{equation}
	and
	\begin{equation}
	x_{i_k}=\max\limits_{j\in J_z} \{\sum\limits_{t=p+1}^{i_{k+1}-1}d_{jt}\}-\max\limits_{j\in J_z}\{\sum\limits_{t=p+1}^{i_k-1}d_{jt}\}
	\end{equation}
	hold, then for each $ i\in [i_k+1,i_{k+1}-1] $, 
	\begin{equation*}
	\begin{aligned}
	s_{ji}&=x_{\phi(i)}+s_{j(\phi(i)-1)}-\sum\limits_{t=\phi(i)}^i d_{jt}\\
	&=x_{i_k}+s_{j(i_k-1)}-\sum\limits_{t=i_k}^i d_{jt}\\
	&=\max\limits_{j\in J_z} \{\sum\limits_{t=p+1}^{i_{k+1}-1}d_{jt}\}-\max\limits_{j\in J_z}\{\sum\limits_{t=p+1}^{i_k-1}d_{jt}\}+\max\limits_{j\in{J_z}}\{\sum\limits_{t=p+1}^{i_k-1}d_{jt}\}-\sum\limits_{t=p+1}^{i_k-1}d_{jt}-\sum\limits_{t=i_k}^id_{jt}\\
	&=\max\limits_{j\in J_z} \{\sum\limits_{t=p+1}^{i_{k+1}-1}d_{jt}\}-\sum\limits_{t=p+1}^id_{jt}
	\end{aligned}
	\end{equation*}
	which means (18) holds.
	
	Thus, we only need to justify (65) (66).We consider to prove that by induction.
	
	For $ i_1 $, by (15) we have
	\begin{equation*}
	s_{j(i_1-1)}=\max\limits_{j\in J_z}\{\sum\limits_{t=p+1}^{i_1-1}d_{jt}\}-\sum\limits_{t=p+1}^{i_1-1}d_{jt}
	\end{equation*}
	then 
	\begin{equation*}
	\begin{aligned}
	x_{i_1}&=\max\limits_{j\in J_z}\{\sum\limits_{t=i_1}^{i_2-1}d_{jt}-s_{j(i_1-1)}\}\\
	&=\max\limits_{j\in J_z}\{\sum\limits_{t=i_1}^{i_2-1}d_{jt}-\max\limits_{j\in{J_z}}\{\sum\limits_{t=p+1}^{i_1-1}d_{jt}\}+\sum\limits_{t=p+1}^{i_1-1}d_{jt}\}\\
	&=\max\limits_{j\in{J_z}}\{\sum\limits_{t=p+1}^{i_2-1}d_{jt}\}-\max\limits_{j\in{J_z}}\{\sum\limits_{t=p+1}^{i_1-1}d_{jt}\}
	\end{aligned}
	\end{equation*}
	
	Now assume that for any $ k_0\le k $, equations (65) (66) hold, then for $ i_{k+1} $, by (15) we have
	\begin{equation*}
	\begin{aligned}
	s_{ji} &= 
	\left\{ \begin{array}{lll} 
	x_{i_k}+s_{j(i_k-1)}-d_{ji_k},\quad i_k=i_{k+1}-1 \\
	x_{\phi(i_{k+1}-1)}+s_{j(\phi(i_{k+1}-1)-1)}-\sum\limits_{t=\phi(i_{k+1}-1)}^{i_{k+1}-1}d_{jt}, \quad i_k<i_{k+1}-1
	\end{array} 
	\right. \\
	&= 
	\left\{ \begin{array}{lll} 
	x_{i_k}+s_{j(i_k-1)}-d_{ji_k},\quad i_k=i_{k+1}-1 \\
	x_{i_k}+s_{j(i_{k}-1)}-\sum\limits_{t=i_k}^{i_{k+1}-1}d_{jt}, \quad i_k<i_{k+1}-1
	\end{array} 
	\right. \\
	&=x_{i_k}+s_{j(i_k-1)}-\sum\limits_{t=i_k}^{i_{k+1}-1}d_{jt}\\
	&=\max\limits_{j\in J_z} \{\sum\limits_{t=p+1}^{i_{k+1}-1}d_{jt}\}-\max\limits_{j\in J_z}\{\sum\limits_{t=p+1}^{i_k-1}d_{jt}\}+\max\limits_{j\in{J_z}}\{\sum\limits_{t=p+1}^{i_k-1}d_{jt}\}-\sum\limits_{t=p+1}^{i_k-1}d_{jt}-\sum\limits_{t=i_k}^{i_{k+1}-1}d_{jt}\\
	&=\max\limits_{j\in J_z} \{\sum\limits_{t=p+1}^{i_{k+1}-1}d_{jt}\}-\sum\limits_{t=p+1}^{i_{k+1}-1}d_{jt}
	\end{aligned}
	\end{equation*}
	and
	\begin{equation*}
	\begin{aligned}
	x_{i_{k+1}}&=\max\limits_{j\in J_z}\{\sum\limits_{t=i_{k+1}}^{\psi(i_{k+1})-1}d_{jt}-s_{j(i_{k+1}-1)}\}\\
	&=\max\limits_{j\in J_z}\{\sum\limits_{t=i_{k+1}}^{i_{k+2}-1}d_{jt}-\max\limits_{j\in J_z}\{\sum\limits_{t=p+1}^{i_{k+1}-1}d_{jt}\}+\sum\limits_{t=p+1}^{i_{k+1}-1}d_{jt}\}\\
	&=\max\limits_{j\in J_z} \{\sum\limits_{t=p+1}^{i_{k+2}-1}d_{jt}\}-\max\limits_{j\in J_z}\{\sum\limits_{t=p+1}^{i_{k+1}-1}d_{jt}\}
	\end{aligned}
	\end{equation*}
	
	Hence, (65) (66) hold.
\end{APPENDIX}




\end{document}